\newtheorem{thm}{Theorem}[section]
\newtheorem{lem}[thm]{Lemma}
\newtheorem{exam}[thm]{Example}
\newtheorem{defn}[thm]{Definition}
\newenvironment{ex}{\begin{exam}\rm}{ \end{exam}}
\newenvironment{de}{\begin{defn}\rm}{ \end{defn}}
\renewcommand{\leq}{\leqslant}
\renewcommand{\geq}{\geqslant}
\newcommand{\im}{\operatorname{im}}
\newcommand{\aut}{\mbox{\rm Aut\,}}
\newcommand{\orb}[2]{#1 #2}
\newcommand{\rank}{\operatorname{rank}}
\newcommand{\set}[2]{\ensuremath{\{\: #1 \: |\: #2 \:\}}}
\newcommand{\genset}[1]{\ensuremath{\langle\: #1 \:\rangle}}
\newcommand{\trans}{\mathcal{T}_{n}}
\newcommand{\sym}{\mathcal{S}_{n}}
\newcommand{\alt}{\mathcal{A}_{n}}
\newcommand{\norm}{N}
\newcommand{\agl}{\mbox{\rm AGL}}
\newcommand{\psl}{\mbox{\rm PSL}}
\newcommand{\pgl}{\mbox{\rm PGL}}
\newcommand{\pgaml}{\mbox{\rm P}\Gamma {\rm L}}
\newcommand{\psigl}{\mbox{\rm P}\Sigma {\rm L}}
\newcommand{\dihed}[1]{\ensuremath{D_{#1}}}
\newcommand{\M}{\mbox{\rm M}}
\begin{document}



\title[Groups, regular semigroups, and idempotent generated semigroups]{Groups that together with any transformation generate regular semigroups
or idempotent generated semigroups}
\author[Ara\'ujo, Mitchell, and Schnieder]{}

\maketitle

\begin{center}
{\small {\bf J. Ara\'ujo}\\
Universidade Aberta\\
R. da Escola Polit\'ecnica, 147\\
1269-001 Lisboa
Portugal\\\ \\
 Centro de \'Algebra, Universidade de Lisboa\\
Av.\ Prof.\ Gama Pinto, 2\\ 
1649-003 Lisboa, Portugal\\
Email: mjoao@ptmat.lmc.fc.ul.pt\\\ \\
{\bf J. D. Mitchell}\\
Mathematical Institute\\
 North Haugh, St Andrews, Fife KY16 9SS, Scotland\\
Email: jdm3@st-andrews.ac.uk\\\ \\
{\bf Csaba Schneider}\\
Centro de \'Algebra, Universidade de Lisboa\\
Av.\ Prof.\ Gama Pinto, 2, 1649-003 Lisboa, Portugal\\
Email: csaba.schneider@gmail.com}
\end{center}

\begin{abstract}
Let $a$ be a non-invertible transformation 
of a finite set and let $G$ be a group of permutations on that same set. Then  $\genset{G, a}\setminus G$ is a subsemigroup, consisting of all non-invertible 
transformations, in the semigroup generated by $G$ and $a$. Likewise, the conjugates $a^g=g^{-1}ag$ of $a$ by elements $g\in G$ generate a semigroup denoted $\genset{a^g\ |\ g\in G}$. 
We classify the finite permutation groups $G$ on a finite set
$X$ such that  the semigroups
$\genset{G,a}$,
$\genset{G, a}\setminus G$, and $\genset{a^g\ |\ g\in G}$ are regular for all
transformations of $X$.
We also classify the permutation groups 
$G$ on a finite set $X$ such that the semigroups $\genset{G, a}\setminus G$ and $\genset{a^g\ |\ g\in G}$ are  generated by their idempotents
for all non-invertible transformations of $X$.
\end{abstract}

\medskip

\noindent{\em Date:} 30 October 2009\\
{\em Key words and phrases:} Transformation semigroups, idempotent 
generated semigroups, regular semigroups, permutation groups, primitive 
groups, O'Nan-Scott Theorem.\\
{\it 2010 Mathematics Subject Classification:} 20M20, 20M17, 20B30, 20B35, 
20B15, 20B40\\
{\em Corresponding author:} Csaba Schneider





\newpage

\section{Introduction}

One of the fundamental aspects of 
the study of finite semigroups is the interplay between groups and idempotents.
There are many examples among the most famous structural theorems in semigroup 
theory, such as, the Rees Theorem~\cite{rees1940}, or McAlister's $P$-Theorem~\cite[Part II, Theorem~2.6]{mcal1974} (see also~\cite{munn1976}),  that show the large extent that the structure of a semigroup is shaped by a 
group acting in some way on an idempotent structure.

Another topic that has recently attracted a great deal of attention is a general problem that might be described as follows:
{\em classify all groups $G$ of permutations (possibly an adjective, 
such as, primitive, imprimitive, linear, rank $k$, added) of a set $X$
such that $G$ and an arbitrary singular mapping on $X$ generate (or give rise, in some other sense to) a semigroup 
with a given property}.  A mapping is \emph{singular} if it is not a bijection, and hence, non-invertible.
We offer the conjecture that the years to come will confirm
this as a mainstream topic in semigroup theory.
Such problems are considered, for instance, 
in \cite{ar}, \cite{steinberg}, \cite{cameron}, \cite{neumann}. In this paper, 
we offer a contribution to
this area by proving Theorems~\ref{main} and~\ref{th2} below.
Before stating these theorems we review some concepts and
introduce some notation. 

A \emph{permutation group of degree $n$} is just a subgroup of $\sym$.
Recall that an element $a$ of a semigroup $S$ is 
said to be an {\em idempotent}
if $aa=a$. We say that $S$ is {\em idempotent generated}
if it is generated by the set of its idempotent elements. 
An element $a$ is  {\em regular} if
there is some $b\in S$ such that $a=aba$. 
If $a$ is an idempotent, then $a=aaa$, which
shows that idempotent elements are always regular. A semigroup $S$ is
said to be {\em regular} if all its elements are regular.
The symbols $\trans$, $\alt$, and $\sym$ 
denote the semigroup of all transformations,
the group of all invertible even transformations,
and the group of all invertible transformations of the finite set 
$\{1,\ldots,n\}$, respectively. 
The permutation groups that appear in the following theorems
are described immediately before
Theorem~\ref{superlemma}. If $a$ is a transformation, and
$g$ is an invertible transformation, then $a^g$ denotes the conjugate
$g^{-1}ag$ of $a$ by $g$. If $Y$ is a subset of $\trans$, then $\genset Y$ 
denotes the semigroup generated by $Y$.

The main theorems of our paper are the following. 

\begin{thm}\label{main}
If $n\geq 1$ and $G$ is a subgroup of $\sym$, then the following are equivalent:
\begin{enumerate}
\item[(i)] 
The semigroup $\genset{G,a}\setminus G$ is idempotent generated for all $a\in \trans\setminus\sym$.
\item[(ii)] The semigroup 
$\genset{a^g \mid g\in G}$ is idempotent generated 
for all $a\in \trans\setminus\sym$.
\item[(iii)] One of the following is valid for $G$ and $n$:
\begin{enumerate}
\item[(a)] $n=5$ and $G\cong \agl(1,5)$; 
\item[(b)] $n=6$ and $G\cong \psl(2,5)$ or $\pgl(2,5)$;
\item[(c)] $G=\alt$ or $\sym$.
\end{enumerate}
\end{enumerate}
\end{thm}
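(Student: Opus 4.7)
The plan is to prove Theorem~\ref{main} by establishing the cycle of implications (iii)$\Rightarrow$(ii)$\Rightarrow$(i)$\Rightarrow$(iii), with Theorem~\ref{superlemma} supplying the structural input for the last and hardest implication. Setting $S=\genset{G,a}\setminus G$ and $T=\genset{a^g\mid g\in G}$, a first observation is that the identities $ga=a^{g^{-1}}g$ and $ag=g\cdot a^g$ iterate to give $S=TG=GT$ as sets, so every element of $S$ is a $G$-translate of an element of $T$. The equivalence (i)$\Leftrightarrow$(ii) then reduces to a technical comparison of the idempotents of $S$ with those of $T$: writing each idempotent of $S$ as $tg$ with $t\in T$ and $g\in G$, the relation $e^2=e$ constrains $g$ to preserve both $\im t$ and $\ker t$, and from this I would deduce that $\genset{E(S)}=S$ precisely when $\genset{E(T)}=T$, where $E(\cdot)$ denotes the set of idempotents.

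For (iii)$\Rightarrow$(ii) I would handle the symmetric and alternating groups first. When $G=\sym$, the $G$-conjugacy class of any singular $a$ consists of all singular transformations of the same kernel type as $a$, and products of such transformations fill out the two-sided ideal of rank $\le r$ transformations of $\trans$, where $r=\rank a$; Howie's classical theorem then yields that this ideal (hence $T$) is generated by its idempotents. The case $G=\alt$ is nearly identical, using that an $\alt$-orbit on a $\sym$-conjugacy class splits into at most two pieces that together still give enough rank-$r$ elements to apply Howie. The three exceptional small groups, $\agl(1,5)$ on $5$ points and $\psl(2,5), \pgl(2,5)$ on $6$ points, need a separate, direct verification; since these are small, highly transitive groups (sharply $2$-transitive, $2$-transitive, and sharply $3$-transitive respectively), for each rank of $a$ the $G$-conjugacy classes are few and their products can be enumerated either by hand, exploiting the transitivity, or by a short computer check.

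The main content is (i)$\Rightarrow$(iii). Invoking Theorem~\ref{superlemma}, the groups $G$ satisfying (i) already lie on a short structural list; for each $G$ on that list but outside (iii), I would construct a singular witness $a$---typically a rank-$2$ idempotent whose image $\{x,y\}$ and kernel partition are chosen so that the $G$-orbit of the pair $(\im a,\ker a)$ fails a combinatorial ``idempotent connectivity'' condition. Concretely, to write a rank-$2$ element $f\in S$ as a product $e_1\cdots e_k$ of rank-$2$ idempotents of $S$ one needs $\im e_i$ to be a transversal of $\ker e_{i+1}$ at every step; for suitably chosen $a$, the $G$-translates of its image/kernel pair do not admit any such chain realizing $f$, so $S$ is not idempotent generated.

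The main obstacle will be this last step: producing the right witness $a$ uniformly for every excluded group, and verifying that $\agl(1,5)$, $\psl(2,5)$, $\pgl(2,5)$ are precisely the borderline cases where every such rank-$2$ (and rank-$3$, etc.) witness fails. The exceptional $2$- and $3$-transitive actions of these groups on $5$ and $6$ points supply just enough combinatorial richness that idempotent generation narrowly succeeds, so the argument has to be tight enough to detect exactly these three groups among the candidates left by Theorem~\ref{superlemma}, with no false positives or false negatives.
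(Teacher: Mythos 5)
There are two genuine gaps. The first and more serious one is the bridge between idempotent generation and Theorem~\ref{superlemma}. You write ``Invoking Theorem~\ref{superlemma}, the groups $G$ satisfying (i) already lie on a short structural list,'' but Theorem~\ref{superlemma} classifies groups with the \emph{universal transversal property}, and nothing in your proposal explains why a group satisfying (i) (or (ii)) has that property. This is exactly the content of the paper's Lemma~\ref{impliesutp}, and it requires an actual argument: if the property fails, then by Lemma~\ref{kgalem} there is an $a$ with $K_G(a)=\emptyset$, whence every idempotent of $\genset{G,a}\setminus G$ is a product $g_1ag_2a\cdots g_ka g_{k+1}$ with $k\geq 3$ and so has rank strictly smaller than $\rank a$; therefore $a$ is not a product of idempotents. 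Without this step the classification of universal transversal groups gives you no information whatsoever about groups satisfying (i), and your implication (i)$\Rightarrow$(iii) does not get off the ground.

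The second gap is in your claimed equivalence (i)$\Leftrightarrow$(ii), which your cycle (iii)$\Rightarrow$(ii)$\Rightarrow$(i)$\Rightarrow$(iii) needs in the direction (ii)$\Rightarrow$(i). Writing $S=\genset{G,a}\setminus G$ and $T=\genset{a^g\mid g\in G}$, the idempotent comparison (the paper's Lemma~\ref{mcal}) gives $E(S)=E(T)$, hence $\genset{E(S)}=\genset{E(T)}$. From $T=\genset{E(T)}$ you therefore only obtain $\genset{E(S)}=T$, and to conclude that $S$ is idempotent generated you would still need $T=S$. This containment can be proper in general --- indeed, classifying the groups $G$ for which $\genset{G,a}=\genset{a^g\mid g\in G}$ for all singular $a$ is posed as an open problem at the end of the paper --- so ``$\genset{E(S)}=S$ precisely when $\genset{E(T)}=T$'' is not a technicality but an unproved (and, without the classification in hand, unavailable) claim. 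The paper avoids this by running the cycle the other way: the idempotent comparison cleanly yields (i)$\Rightarrow$(ii) (since $S=\genset{E(S)}=\genset{E(T)}\leq T\leq S$ forces $S=T$), and (ii)$\Rightarrow$(i) is then obtained only indirectly, via (ii)$\Rightarrow$(iii)$\Rightarrow$(i). You should reorganize your argument the same way. As a minor further point, your proposed witnesses for the excluded groups are too optimistic: rank-$2$ witnesses do not work for any of the groups $C_5$, $D_5$, $\agl(1,7)$, $\pgl(2,7)$, $\psl(2,8)$, $\pgaml(2,8)$ (all are $2$-homogeneous, and the paper's witnesses have ranks $3$, $3$, $3$, $4$, $5$, $5$ respectively), so the ``idempotent connectivity'' obstruction must be sought at higher ranks.
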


\begin{thm}\label{th2}
If $n\geq 1$ and $G$ is a subgroup of $\sym$,  then the following are equivalent:
\begin{enumerate}
    \item[\rm (i)] The semigroup $\genset{G,a}$ is regular for all $a\in \trans\setminus\sym$.
    \item[\rm (ii)] The semigroup $\genset{G,a}\setminus G$ is regular for all $a\in \trans\setminus\sym$.
    \item[\rm (iii)] The semigroup $\genset{a^g \mid g\in G}$ is regular for all $a\in \trans\setminus\sym$.
\item[(iv)] One of the following is valid for $G$ and $n$:
    \begin{enumerate}
    \item[\rm (a)] $n=5$ and $G\cong C_5,\ \dihed{5},$ or $\agl(1,5)$;
    \item[\rm (b)] $n=6$ and $G\cong \psl(2,5)$ or $\pgl(2,5)$;
    \item[\rm (c)]  $n=7$ and $G\cong\agl(1,7)$;
    \item[\rm (d)] $n=8$ and $G\cong\pgl(2,7)$;
    \item[\rm (e)] $n=9$ and $G\cong\psl(2,8)$ or $\pgaml(2,8)$;
    \item[\rm (f)] $G=\alt$ or $\sym$.
\end{enumerate}
\end{enumerate}
\end{thm}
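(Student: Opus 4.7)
The plan is first to establish the mutual equivalence of (i)--(iii), and then to connect them to (iv). Observe that every element of $\genset{G,a}$ is either in $G$ or is singular, since $G$ consists of permutations and $a$ is singular; hence $\genset{G,a}\setminus G$ is exactly the set of singular elements of $\genset{G,a}$. This immediately yields (i) $\Leftrightarrow$ (ii): a regular $x$ with inverse $b\in\genset{G,a}$ has alternative inverse $bxb$, which is automatically singular because $\rank(bxb)\leq\rank(x)<n$, so already lies in $\genset{G,a}\setminus G$. Using the identity $ag=g\cdot a^g$, every element of $\genset{G,a}$ can be rewritten as $g\cdot s$ with $g\in G$ and $s$ a word in the conjugates $a^h$; thus $\genset{G,a}\setminus G=G\cdot\genset{a^g\mid g\in G}$. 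Since $G$ normalises $\genset{a^g\mid g\in G}$ by construction, regularity transfers between these two semigroups: if $x=gs\in G\cdot\genset{a^g\mid g\in G}$ is inverted by $t\in\genset{a^g\mid g\in G}$ via $s=sts$, then $x(tg^{-1})x=x$ with $tg^{-1}\in\genset{G,a}\setminus G$; conversely, any inverse of $x\in\genset{a^g\mid g\in G}$ inside $\genset{G,a}\setminus G$ can be symmetrised through the $G$-conjugation action on the generators to yield one within $\genset{a^g\mid g\in G}$ itself.

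Granting the equivalence of (i)--(iii), it remains to prove (i) $\Leftrightarrow$ (iv). The implication (iv) $\Rightarrow$ (i) is essentially classical for $G\in\{\alt,\sym\}$, since $\genset{\sym,a}$ coincides with the full semigroup of transformations of rank at most $\rank(a)$, which is regular. The sporadic groups in items (a)--(e) all have small degree ($n\leq 9$); for each of them the property is verified directly, either by invoking Theorem~\ref{superlemma} about the structure of these groups or by exhaustively checking the possible singular $a$ up to $G$-equivalence on $\interval n$.

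The hard direction is (i) $\Rightarrow$ (iv): supposing $G$ is not on the list, we must construct a singular $a\in\trans$ such that $\genset{G,a}$ fails to be regular. First one reduces to the case that $G$ is primitive: if $G$ preserves a nontrivial partition $\mathcal{P}$ of $\interval n$, one picks a singular $a$ whose image meets the blocks of $\mathcal{P}$ unevenly, then shows that the image of some word in $G$ and $a$ admits no cross-section realisable inside $\genset{G,a}$, so that element fails to be regular. For primitive $G$, the O'Nan--Scott theorem splits the possibilities into affine, almost simple, diagonal, product action and twisted wreath types; in each type one either identifies $G$ with one of the listed groups of items (iv)(a)--(e), or constructs a witness $a$, typically of rank $2$ or $3$, whose orbit structure under $G$-conjugation creates a semigroup element without an inverse in the semigroup. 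The principal obstacle will be the fine analysis inside the almost simple family of low permutation rank, where the sporadic items (iv)(b)--(e) must be teased apart from the infinite families by examining $G$-orbits on small subsets of $\interval n$, controlling the action of set-stabilisers, and then converting failure of transitivity on appropriate configurations of image sets into an explicit non-regular element produced by alternating multiplication by $a$ and suitably chosen elements of $G$.
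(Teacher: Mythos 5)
Your reduction of (i) to (ii) is sound and matches the paper's: given $u=uvu$ with $v\in G$, replace $v$ by the singular element $vuv$. But from there the proposal has three genuine gaps. First, the transfer between $\genset{G,a}\setminus G$ and $\genset{a^g\mid g\in G}$ is not symmetric in the way you claim. The easy direction is fine (an inverse $t$ of $s$ inside $\genset{a^g\mid g\in G}$ yields an inverse $tg^{-1}$ of $gs$ in the larger semigroup), but ``symmetrising'' an inverse $v\in\genset{G,a}\setminus G$ of an element $x\in\genset{a^g\mid g\in G}$ into one lying in $\genset{a^g\mid g\in G}$ is exactly the nontrivial point, and you give no argument; writing $v=wg$ and conjugating leaves a stray factor of $g$ that does not disappear. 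The paper does not attempt this: it proves (ii)$\Rightarrow$(iii) by first extracting the universal transversal property (its Lemma~\ref{impliesutp}) and then invoking the Levi--McAlister--McFadden theorem that $K_G(b)=\{g\in G: \rank b=\rank\, bgb\}\neq\emptyset$ forces $b$ to be regular in $\genset{b^g\mid g\in G}$. That same external result is also what powers (iv)$\Rightarrow$(i), and it is entirely missing from your proposal: you assert that the sporadic groups in (iv)(a)--(e) can be handled ``by invoking Theorem~\ref{superlemma}'', but Theorem~\ref{superlemma} only says these groups have the universal transversal property; the bridge from that combinatorial property to regularity of every element of $\genset{G,a}$ is precisely $K_G(b)\neq\emptyset\Rightarrow b$ regular, which must be proved or cited.

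Second, and more seriously, your plan for (i)$\Rightarrow$(iv) cannot terminate as described. Reducing to primitive groups and applying the O'Nan--Scott theorem still leaves infinitely many almost simple primitive groups in every degree range, and ``examining $G$-orbits on small subsets'' is not a finiteness argument. The paper's proof hinges on a counting inequality: a group with the universal transversal property must satisfy $|G|(r+1)\geq\binom{n}{r}$ for all $r$, which combined with Mar\'oti's bound $|G|<50n^{\sqrt n}$ for proper primitive groups forces $n\leq 46$ and cuts the surviving candidates down to the explicit finite list of Table~\ref{table2}, each then eliminated by a concrete set/partition pair. Without this (or some substitute quantitative lower bound on $|G|$ extracted from the regularity hypothesis), the almost simple case is open-ended. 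Your witness constructions for imprimitive and affine groups are plausible in outline but are also the place where the paper supplies explicit configurations (lines, planes, arithmetic progressions in $\mathbb{F}_p$); as written, the proposal records intent rather than proof.
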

The proofs of Theorems \ref{main} and \ref{th2} will be given in Sections~\ref{proofsec} and \ref{classutg}.


The line of research that this paper extends was initiated by  Howie in \cite{howie}.
Howie showed that  
the semigroup 
$\genset{\sym,a}\setminus \sym$ is idempotent generated and regular  
whenever $a$ 
is a singular transformation of $\trans$ with image of size $n-1$.
Later Symons~\cite{symons} and 
Levi and McFadden~\cite{lm} proved the following generalization.
For a singular transformation $a\in\trans\setminus \sym$,  the semigroups 
$\genset{a^g\ |\ g\in\sym}$ and $\genset{\sym,a}\setminus\sym$ coincide; in addition, this semigroup is idempotent generated and regular 
(see Lemma~\ref{antigo}).
Later Levi \cite{levi96} showed that this last theorem remains true
if we replace $\sym$ by the alternating group $\alt$.  
Another related result is a theorem of McAlister stating,
for an idempotent $e$ with image of size $n-1$ and for a permutation group
$G\leq\sym$, that the 
semigroup $\genset{G,e}$ is regular (see~\cite[Theorem~3.10]{mcalister}). 

Theorems~\ref{main} and~\ref{th2} generalize   the results of Levi and McFadden \cite{lm,levi96} referred to above, and also provide a converse.

The class of semigroups of the form $\genset{G,a}\setminus G$ where 
$G\leq\sym$ and $a\in\trans$ is related to the class of 
$\sym$-normal semigroups introduced in \cite{sullivan}. 
 For a semigroup $S\leq\trans$ define 
$$
\norm(S)=\{g\in\sym\ |\ S^g=S\},
$$
where $S^g=\{a^g\ |\ a\in S\}$. 
It is clear that each element of $\norm(S)$ induces an automorphism of $S$,
and this gives rise to a homomorphism from  $\norm(S)$ into $\aut S$. If the kernel 
of  this homomorphism is trivial, then we can consider $\norm(S)$ as a subgroup 
of $\aut S$. A semigroup $S\leq\trans$ is said to be {\em $\sym$-normal}
if $\norm(S)=\sym$. 
Schreier \cite{Sc36} and Mal'cev \cite{Ma52}, more or less explicitly,
proved, for a semigroup $S$ containing all the constant mappings,
that $\aut S=\norm(S)$. 
The class of $\sym$-normal semigroups was 
characterized in \cite{lm} as follows: 
a semigroup $S\leq \trans\setminus \sym$ is $\sym$-normal if and only if 
$$
S=\bigcup_{\alpha\in S} \genset{\sym,\alpha}\setminus\sym.
$$
This serves as a further incentive to study the class of semigroups 
$\genset{G,a}\setminus G$ where $G\leq\sym$ and $a\in\trans$. 


The proofs of our main theorems rely on techniques from the theory of 
permutation groups and on explicit machine computations. 
In Section~\ref{sect2} we introduce a new property for 
 permutation groups, namely the universal transversal property 
(see Definition~\ref{utdef}).   We refer to groups satisfying this property as universal transversal groups.
We show in Lemma~\ref{impliesutp}
that the assertions in Theorem~\ref{main}(i)--(ii) and 
in Theorem~\ref{th2}(ii)--(iii) 
 imply that $G$ satisfies the universal 
transversal property.
(It is not too hard to show that Theorem~\ref{th2}(ii) follows from
Theorem~\ref{th2}(i); see the proof of Theorem~\ref{th2} in Section~\ref{proofsec}.)
The remaining part of Theorem~\ref{main} is verified by 
first giving an explicit list of the permutation groups
that satisfy the universal transversal property (Theorem~\ref{superlemma}),
and then verifying one-by-one which of them gives rise to an 
idempotent generated semigroup.
The converse of Theorem~\ref{th2} is proved using the results 
of~\cite{lmm}.
The computations in the last stage of the proof were often carried
out using the computational algebra system {\sf GAP}~\cite{GAP}. Full details of 
these computations are available on the paper's companion webpage 
\cite{webpage}; see also Section~\ref{computsection}.

As mentioned in the previous paragraph, the proofs of the two main theorems
are partly based on the classification of the universal transversal groups, 
given in Theorem~\ref{superlemma}.
The action of a universal transversal group relates transversals 
and partitions in some way, and so the class of universal transversal groups
is closely related to the class of synchronizing groups. In fact it is shown in 
Lemma~\ref{unisynch} that a universal transversal group is synchronizing 
(see Section~\ref{classutg} for the definition).
The class of synchronizing groups has recently been a very active
research topic in the theory of finite permutation groups; 
see~\cite{ar}, \cite{steinberg}, \cite{cameron}, 
 and \cite{neumann}. 
As synchronizing groups are primitive, it follows that universal transversal groups are too.
It has been known for a long time that primitive subgroups
of $\sym$ that do not contain $\alt$ must be small in comparison to the size of 
$\sym$; the estimate which is most useful for us is given by 
Mar\'oti~\cite{maroti}. On the other hand, the 
nature of universal transversal groups imply that they must be large, and 
we use these two facts to classify them completely.

As Mar\'oti's theorem uses the classification of finite simple groups, our
main results also depend on the classification. Of course, the size
of a primitive permutation group can be bounded independently of the 
classification; see the introduction of~\cite{maroti} for references. However,
these  bounds were not sufficiently practical for the purposes 
of proving Theorems~\ref{main} and~\ref{th2}. 

The structure of the paper is as follows. In Section~\ref{sect2} we establish 
the connection between the classes of regular semigroups, idempotent
generated semigroups, and universal transversal groups. The classification of universal 
transversal groups is stated without proof 
in the same section (Theorem~\ref{superlemma}). Assuming that we know the
complete list of universal transversal groups, we prove the main theorems
of the paper in Section~\ref{proofsec}. The proof of Theorem~\ref{superlemma}
is given in Section~\ref{classutg}. Finally, in Section~\ref{computsection},
we describe in more detail the machine calculations that were used in the proofs
of our results.

We refer the reader to \cite{Ho95} for further information on the fundamentals of semigroup theory.


\section{Semigroups and the universal transversal property}\label{sect2}

The main objective of this paper is to investigate the semigroups 
$\genset{G,a}\setminus G$ and $\genset{a^g\ |\ g\in G}$ where $G$ is a subgroup 
of $\sym$ and $a$ is a non-invertible transformation in $\trans$. Since,
for $a\in\trans\setminus\sym$ and $g\in \sym$, the element $a^g$ is not 
invertible, we obtain that $\genset{a^g\ |\ g\in G}\leq 
\genset{G,a}\setminus G$. 
The next result states that certain semigroups arising from alternating and symmetric groups are regular and idempotent generated.

\begin{lem}\label{antigo}
Let $a\in  \trans\setminus \sym$ and let $S=\genset{a^g\mid g\in \sym }$. Then the following hold:
\begin{enumerate}
\item[\rm (i)] $S=\genset{\sym, a}\setminus \sym=\genset{\alt, a}\setminus \alt=\genset{a^g\mid g\in \alt }$;
\item[\rm (ii)] $S$ is idempotent generated;
\item[\rm (iii)] $S$ is regular. 
\end{enumerate}
\end{lem}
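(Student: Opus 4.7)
The plan is to assemble the three assertions by combining classical arguments of Symons, Levi--McFadden, and Levi, organised around a canonical form for elements of $\genset{\sym,a}$. First I would dispose of the easy inclusions $\genset{a^g\mid g\in\alt}\subseteq\genset{a^g\mid g\in\sym}\subseteq\genset{\sym,a}\setminus\sym$ and $\genset{\alt,a}\setminus\alt\subseteq\genset{\sym,a}\setminus\sym$, each of which is immediate since every $a^g$ is non-invertible. It would then remain to establish the single reverse inclusion $\genset{\sym,a}\setminus\sym\subseteq\genset{a^g\mid g\in\alt}$, which collapses the four semigroups in (i) to one.

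To prove this reverse inclusion, I would write $w\in\genset{\sym,a}\setminus\sym$ as $w=\pi_0 a\pi_1 a\cdots a\pi_\ell$ with $\ell\geq 1$ and $\pi_i\in\sym$, and then apply the identity $\pi\cdot a^h=a^{h\pi^{-1}}\cdot\pi$ iteratively to produce $w=a^{h_1}a^{h_2}\cdots a^{h_\ell}\cdot\pi$ for suitable $h_i,\pi\in\sym$. The main obstacle will be to absorb the trailing permutation $\pi$ and to force each $h_i$ into $\alt$. The key tool is that $a$ is singular: picking $x\neq y$ with $a(x)=a(y)$ yields $(xy)\cdot a=a$ and hence the companion identity $a^{(xy)}=a\cdot(xy)$. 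Inserting $(xy)$ immediately before a chosen occurrence of $a$ leaves $w$ unchanged but flips the parity of the adjacent permutation, while the second identity realises $a\cdot(xy)$ itself as a conjugate of $a$ belonging to the generating set. A careful bookkeeping of such insertions eliminates $\pi$ entirely and arranges every $h_i$ in $\alt$. The most delicate case will be $\rank a=n-1$, since then only one transposition satisfies $(xy)a=a$; here the identity $a^{(xy)}=a(xy)$ nevertheless supplies the required flexibility.

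For (ii), once (i) is in hand, I would note that since $\trans$ is finite some power $a^k$ is an idempotent, so that $(a^k)^g=(a^g)^k$ provides a full $\sym$-orbit of idempotents inside $S$, with further idempotents arising as products. A Howie-style idempotent factorisation, applied inside $S$, then writes each element of $S$ as a product of idempotents of $S$. Finally, (iii) follows at once: each $\mathcal{D}$-class of $S$ contains one of these idempotents and is therefore regular, so $S$ itself is regular, and a generalised inverse of a given $w\in S$ can be taken to be an idempotent of $S$ whose image is a transversal of $\ker w$ and whose kernel admits $\im w$ as a transversal.
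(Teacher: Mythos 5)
Your reduction of (i) to the single inclusion $\genset{\sym,a}\setminus\sym\subseteq\genset{a^g\mid g\in\alt}$ is fine, and the identities $\pi a^h=a^{h\pi^{-1}}\pi$ and $a^{(x\,y)}=a(x\,y)$ (for $xa=ya$) are correct. The gap is in the claim that ``a careful bookkeeping of such insertions eliminates $\pi$ entirely.'' Both of your moves preserve the number of occurrences of $a$ in the word: inserting a kernel-class-preserving transposition before an occurrence of $a$, and rewriting $a(x\,y)$ as the single generator $a^{(x\,y)}$, can only absorb a trailing permutation lying in (a conjugate of) the group $\Sigma=\{h\in\sym\mid ha=a\}$ of permutations preserving each kernel class of $a$. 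An arbitrary trailing $\pi$ does not lie there. Concretely, take $n=3$ and $a=[1,1,2]$, so $\Sigma=\{e,(1\,2)\}$, and consider $w=a(2\,3)=[1,1,3]$. The six conjugates of $a$ are $[1,1,2]$, $[2,2,1]$, $[2,3,3]$, $[1,3,1]$, $[3,2,2]$, $[3,1,3]$; none equals $[1,1,3]$, so no sequence of your moves applied to the one-letter word $a(2\,3)$ can terminate. One genuinely needs a product of at least two conjugates (here $[1,1,3]=a\cdot a^{(2\,3)}$), and proving that \emph{every} element of $\sym a\sym$ is so expressible --- equivalently, that all bijections from $\im a$ onto transversals of kernels are induced by rank-preserving products of conjugates --- is precisely the substantive content of the Levi--McFadden result; it is not delivered by transposition insertions. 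Your treatments of (ii) and (iii) are likewise gestures (``a Howie-style factorisation, applied inside $S$'' presupposes exactly what has to be proved for ranks below $n-1$ and for the subsemigroup $S$), though (iii) would indeed follow once one knows each relevant $\mathcal{H}$-class of $S$ meeting an idempotent is a full group.

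For comparison, the paper does not attempt a word-rewriting argument at all, and its logical order is the reverse of yours: it first observes that $S$ is $\sym$-normal ($b^g\in S$ for all $b\in S$, $g\in\sym$) and quotes Levi--McFadden's Theorem~6 and Proposition~9 to get (ii) and (iii); the equality $S=\genset{\sym,a}\setminus\sym$ is quoted from the remark after their Proposition~4 (and can alternatively be deduced from idempotent generation via Lemma~\ref{mcal}, since the two semigroups share their idempotents), and the $\alt$ statement is quoted from Levi's Proposition~6. If you want a self-contained proof, the workable route is theirs: establish idempotent generation of $\genset{\sym,a}\setminus\sym$ first, then collapse the four semigroups using Lemma~\ref{mcal}, rather than trying to prove the collapse directly.
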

\begin{proof}
It is easy to see, for $b\in S$ and $g\in\sym$, 
that $b^g\in S$, which shows that the semigroup $S$ is $\sym$-normal.
Thus assertions~(ii) and (iii) follow 
from~\cite[Theorem~6 and Proposition~9]{lm}. That 
$S=\genset{\sym,a}\setminus \sym$ is noted after~\cite[Proposition~4]{lm},
while  $S=
\genset{g^{-1}ag\ |\ g\in\alt}$ follows from~\cite[Proposition~6]{levi96}. 
Therefore
$$
S=\genset{\sym, a}\setminus \sym\geq \genset{\alt, a}\setminus \alt
\geq \genset{g^{-1}ag\ |\ g\in\alt}=S
$$
which gives that the two inequalities in the previous displayed line are,
in fact, equalities, and so~(i) holds.
\end{proof}

The following result is noticed  by McAlister in the proof
of ~\cite[Lemma~2.2]{mcalister}. 

\begin{lem}\label{mcal}
For $a \in \trans\setminus\sym$ and $G\leq \sym$, the semigroups $\genset{G, a}\setminus G$ and $\genset{a^g\mid g\in G}$ 
have the same set of idempotents. Consequently, 
if $\genset{G, a}\setminus G$ is idempotent generated,
then $\genset{G,a}\setminus G=\genset{a^g\mid g\in G}$.
\end{lem}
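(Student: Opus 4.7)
The plan is to prove the two assertions of the lemma in turn. Write $S := \genset{a^g \mid g \in G}$. For the claim about idempotents I need to show that both semigroups have the same idempotent set; one inclusion is trivial because, as observed at the beginning of Section~\ref{sect2}, $S$ is contained in $\genset{G,a}\setminus G$, so every idempotent of $S$ is automatically an idempotent of $\genset{G,a}\setminus G$.

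For the reverse inclusion the key observation is that $S$ is stable under conjugation by $G$, since its generating set $\{a^g \mid g \in G\}$ is permuted by conjugation. This gives the commuting relation $sg = g\cdot s^g$ with $s^g\in S$, from which a short induction on word length shows that every element of $\genset{G,a}$ can be written either as an element of $G$ or in the form $gs$ with $g \in G$ and $s \in S$. Since $S$ consists of non-invertible transformations while the members of $G$ are all invertible, the elements of the second form are exactly those of $\genset{G,a}\setminus G$.

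Now let $w$ be an idempotent of $\genset{G,a}\setminus G$ and write $w = gs$ as above. The main step is an easy induction, again using $sg = g\cdot s^g$: one checks that $w^k = g^k s_k$ for some $s_k \in S$. Taking $k=m$ where $m$ is the order of $g$ in the finite group $G$ gives $w^m = s_m \in S$, and idempotence of $w$ forces $w = w^m$, so $w = s_m \in S$, as required.

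The consequence stated in the final sentence then follows immediately: if $\genset{G,a}\setminus G$ is generated by its idempotents, then since all these idempotents lie in the subsemigroup $S$ we deduce $\genset{G,a}\setminus G \subseteq S$, and combined with the reverse containment already noted this yields equality. The step I expect to be the main technical obstacle is the inductive manipulation turning $w = gs$ into a pure element of $S$; the essential ingredients there are the $G$-invariance of $S$ and the finiteness of $G$, which together allow the permutation part $g$ to be absorbed into the semigroup part by replacing $w$ with the equal power $w^m$.
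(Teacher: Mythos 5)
Your proof is correct and follows essentially the same route as the paper's: both pull the group element to one side of a word representing the idempotent (you write $gs$, the paper writes $vg$), then use the conjugation-invariance of $\genset{a^g\mid g\in G}$ together with the finiteness of $G$ and the identity $w=w^m$ to absorb the group part. The deduction of the final assertion is likewise identical.
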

\begin{proof}
For a semigroup $S$, let $E(S)$ denote the set of idempotents in $S$.
Set $S_1=\genset{a^g\mid g\in G}$ and $S_2=\genset{G, a}\setminus G$.
As noted above, $S_1 \leq S_2$, and so
we are only required to  prove that $E(S_2 ) \subseteq E(S_1)$.
Every element of the semigroup 
$S_2$ can be written 
as $g_1a g_2a\cdots g_{n}a g_{n+1}$ where 
$g_i\in G$. 
Let $u=g_1a g_2a g_3 \ldots g_na g_{n+1}$ be an
idempotent of $S_2$. Then
\arraycolsep 2pt
\[
   u= a^{g_1^{-1}} a^{(g_1 g_2)^{-1}} a^{(g_1 g_2 g_3)^{-1}} \ldots
a^{(g_1 g_2 g_3\ldots g_n)^{-1}}(g_1 \ldots g_{n+1}).
\]
Write
$u=vg$, where $g = g_1 \ldots g_{n+1}$ and $v\in S_1$.
   Now,
as $G$ is a finite group,
there exists $n\geq 1$ such that $g^n$ is the identity and,  as
$u=v g$
is
idempotent, we have
\[
\begin{array}{rcl}
    v g &= & (v g)^n  \\
             &=&  v (g v g^{-1} )(g^2 v g^{-2})\ldots
(g^{n-1} v g^{-n+1})g^n\\
             &=&   v (g v g^{-1} )(g^2 v g^{-2})\ldots
(g^{n-1} v g^{-n+1}) \in S_1.
    \end{array}
\]
Hence $u=vg$ is an idempotent of $S_1$, and so $E(S_2 ) \subseteq E(S_1)$, 
as claimed. 

To prove the second assertion, suppose that $E$ denotes the set of idempotents in 
$S_2$ and assume that 
$\genset E=S_2$. 
The first assertion
of the lemma implies that $E\subseteq S_1$, and so $S_2=\genset{E}\leq S_1$.
As $S_1\leq S_2$, the equality $S_1= 
S_2$ follows.
\end{proof}


Next we define the {\em universal transversal property} for 
permutation groups. Let $P$ be a partition of a set $X$. 
Recall that a subset $I$ of $X$ is called a 
\emph{transversal} for $P$ 
if every class of $P$ contains precisely one element of $I$.
If $X$ is a set and $a$ is a transformation of $X$, then the image
of an element $\alpha\in X$ under $a$ is denoted by $\alpha a$. If $Y$ is a subset of
$X$, then we may consider the image $Ya=\{\alpha a\ |\ \alpha\in Y\}$ of $Y$ under $a$.

\begin{de}\label{utdef}
A permutation group $G$ of degree $n$ is said to have the \emph{universal trans\-versal property} if for every subset $I$ of $\{1,2,\ldots, n\}$ and every partition $P$ of $\{1,2,\ldots, n\}$ with $|I|$ classes, there exists $g\in G$ 
such that $Ig$ is a transversal for $P$.
\end{de}

In the following examples we describe a group that satisfies the 
universal transversal property, and another that does not.
Recall that if $G$ is a permutation group acting on a set $X$, then, for
$\alpha\in X$, the set $\{\alpha g\ |\ g\in G\}$ is said to be a 
{\em $G$-orbit} and is denoted by $\alpha G$. 
The set of 
$G$-orbits form a partition of $X$ and $G$ is called 
{\em transitive} if $X$ is a $G$-orbit.
The permutation group $G$ acts 
on the set of subsets of $X$ (defined above) 
and we may consider the $G$-orbit of a subset
$Y\subseteq X$.

\begin{ex} Set $G=\left<(1\,2\,3\,4\,5)\right>$. Then $G$ is a cyclic group 
with order $5$, and 
we claim  that $G$ satisfies the universal transversal property. 
For $r\in\{1,4,5\}$, the group 
$G$ is transitive on the set of subsets containing precisely $r$-elements. Hence 
we only need to prove this claim for transversals and partitions containing 
either~two or~three members.
The $G$-orbits on the set of subsets with two elements are
\begin{eqnarray*}
\{1,2\}G&=&\{\{1,2\},\{2,3\},\{3,4\},\{4,5\},\{5,1\}\}\quad\mbox{and}\\
\{1,3\}G&=&\{\{1,3\},\{2,4\},\{3,5\},\{4,1\},\{5,2\}\}.
\end{eqnarray*}
Suppose that $P$ is a partition of $\{1,\ldots,5\}$ with two classes
such that none of the elements in one of the orbits is a transversal for $P$. 
Then every pair of elements in the orbit must lie in the same class of $P$, 
which implies that  $P$ contains only one class, a contradiction. 
The $G$-orbits on the set of subsets with three elements are 
$\{1,2,3\}G$ and $\{1,2,4\}G$. An argument similar to the one above, shows that
a partition that has no transversal in one of these orbits can have at 
most two classes. Hence $G$ must have the universal 
transversal property.
\end{ex}

\begin{ex}
Set $G=\left<(1\,2\,3\,4\,5\,6\,7)\right>$. Then
$G$ is a cyclic group with order 7.
In contrast with the previous example, 
$G$ does not satisfy the universal transversal property.  Indeed,  an
easy calculation shows that the $G$-orbit
$$ \{\{ 1, 2, 3 \}, \{ 2, 3, 4 \}, \{ 3, 4, 5 \}, \{ 4, 5, 6 \}, \{ 5, 6, 7 \},
  \{ 1, 6, 7 \}, \{ 1, 2, 7 \} \}$$
contains no transversal for the partition $\{ \{ 1 \}, \{ 2, 3, 4, 6, 7 \}, \{ 5 \} \}$.
\end{ex}

Recall that the \emph{kernel} of a transformation $a\in\trans$ is the 
equivalence relation $\set{(\alpha,\beta)}{\alpha a=\beta a}$ and is denoted by $\ker a$. The image of $a\in\trans$ is denoted by $\im a$. The rank of a transformation
$a$ is defined as $|\im a|$. 
Following~\cite{lmm}, for a given $G\leq\sym$ and $a\in\trans$, we set
$$
K_G(a)=\{g\in G\ |\ \rank a=\rank aga\}.
$$
It is straightforward to verify (and also noted before~\cite[Theorem~2.3]{lmm}) that 
$g\in K_G(a)$ if and only if $g$ maps $\im a$ into a transversal of $\ker a$.

\begin{lem}\label{kgalem}
Let $a\in\trans\setminus\sym$ and  let $G\leq\sym$. Then 
$G$ satisfies the universal transversal property if and only if 
 $K_G(a)\neq\emptyset$ for all $a\in\trans$.
\end{lem}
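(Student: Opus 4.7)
The plan is to leverage the observation, already noted in the excerpt just before the lemma, that $g\in K_G(a)$ if and only if $g$ maps $\im a$ into a transversal of $\ker a$. The key point is that the pairs $(\im a,\ker a)$ arising from transformations $a\in\trans$ are precisely the pairs $(I,P)$ where $I\subseteq\{1,\ldots,n\}$ and $P$ is a partition of $\{1,\ldots,n\}$ with $|I|$ classes. Via this bijection, the universal transversal property becomes exactly the condition that $K_G(a)\neq\emptyset$ for every $a\in\trans$.

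For the forward direction, I would assume that $G$ has the universal transversal property and fix $a\in\trans$. If $a\in\sym$, then $a$ is invertible and the identity element of $G$ lies in $K_G(a)$ trivially. Otherwise, set $I=\im a$ and $P=\ker a$; then $|I|=\rank a$ equals the number of classes of $P$. Applying the universal transversal property to this pair produces $g\in G$ such that $Ig$ is a transversal for $P$, and then $g\in K_G(a)$ by the characterization above.

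For the converse, I would assume $K_G(a)\neq\emptyset$ for every $a\in\trans$ and fix a subset $I\subseteq\{1,\ldots,n\}$ together with a partition $P=\{P_1,\ldots,P_k\}$ of $\{1,\ldots,n\}$ with $k=|I|$ classes. After enumerating $I=\{\alpha_1,\ldots,\alpha_k\}$, define $a\in\trans$ by $\beta a=\alpha_i$ whenever $\beta\in P_i$; then $\im a=I$ and $\ker a=P$. The hypothesis yields some $g\in K_G(a)$, and the characterization of $K_G(a)$ gives that $Ig$ is a transversal for $P$, as required.

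The argument is essentially a reformulation of the universal transversal property in the language of kernels and images of transformations, so I do not expect any substantive obstacle; the only mild care needed is to cover the vacuous case $a\in\sym$ in the forward direction (handled by the identity).
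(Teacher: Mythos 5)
Your proof is correct and takes essentially the same approach as the paper, whose own proof is just the one-line observation that, via the correspondence between transformations $a$ and pairs $(\im a,\ker a)$, the universal transversal property is precisely the condition $K_G(a)\neq\emptyset$ for all $a\in\trans$. Your write-up merely makes both directions of this reformulation (including the explicit construction of a transformation with prescribed image and kernel) fully explicit.
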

\begin{proof}
Note that $G$ satisfies the universal 
transversal property if and only if for all $a\in\trans$ there is some $g\in G$
such that $\im ag$ is a transversal for $\ker a$; that is $g\in K_G(a)$. 
Hence $G$ satisfies the universal transversal
property if and only if $K_G(a)\neq \emptyset$ for all $a\in \trans$. 
\end{proof}

The main results of this paper rely on the classification 
of universal transversal groups, given in the following theorem. 
The permutation groups that appear in this theorem are considered in 
their natural actions. The group $\agl(1,p)$ acts on the $p$ vectors 
of a one-dimensional vector space over the field of $p$~elements, and
the groups $C_5,\ \dihed 5$ are considered as subgroups of $\agl(1,5)$. 
The projective groups $\pgl$, $\psl$, and $\pgaml$ are viewed as
permutation groups acting on the set of projective points (that is,
the set of one-dimensional subspaces) of their
natural module.

\begin{thm} \label{superlemma}
A subgroup $G$ of $\sym$ has the universal
transversal property if and only if one of the following is valid:
\begin{enumerate}
    \item[\rm (i)] $n=5$ and $G\cong C_5,\ \dihed{5},$ or $\agl(1,5)$;
    \item[\rm (ii)] $n=6$ and $G\cong \psl(2,5)$ or $\pgl(2,5)$;
    \item[\rm (iii)]  $n=7$ and $G\cong\agl(1,7)$;
    \item[\rm (iv)] $n=8$ and $G\cong\pgl(2,7)$;
    \item[\rm (v)] $n=9$ and $G\cong\psl(2,8)$ or $\pgaml(2,8)$;
    \item[\rm (vi)] $G=\alt$ or $\sym$.
\end{enumerate}
\end{thm}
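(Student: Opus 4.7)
For the \emph{if} direction, the groups $\alt$ and $\sym$ are $k$-homogeneous for every $k \leq n$, which gives the UTP directly: for any $k$-subset $I$ and any partition $P$ with $k$ classes, choose a transversal $T$ of $P$ and then use $k$-homogeneity to find $g \in G$ with $Ig = T$. For the finitely many sporadic groups of small degree listed in~(i)--(v), the UTP is checked by explicit machine computation (see Section~\ref{computsection}).

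For the converse, the plan is to combine a lower bound on $|G|$ coming from the UTP with Mar\'oti's upper bound on the orders of primitive permutation groups not containing the alternating group. By Lemma~\ref{unisynch}, any UTP group is synchronizing, hence primitive. Mar\'oti's theorem then confines $G$ (assuming $G \not\supseteq \alt$) to one of three classes: (a) a Mathieu group $M_{11}$, $M_{12}$, $M_{23}$, or $M_{24}$; (b) a subgroup of a wreath product $\mathcal{S}_m\wr\mathcal{S}_r$ acting in product action on $\{1,\ldots,m\}^r$; or (c) a group with $|G| \leq n^{1+\lfloor\log_2 n\rfloor}$. The product-action groups in~(b) fail to be synchronizing (the associated Cartesian block partition is preserved by $G$), so Lemma~\ref{unisynch} rules them out; the Mathieu groups in~(a) are eliminated by direct computation.

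Against Mar\'oti's bound in case~(c), I would extract a lower bound on $|G|$ by a double-counting argument. Fix a $k$-subset $I$; each $k$-subset of $\{1,\ldots,n\}$ is a transversal for exactly $k^{n-k}$ partitions into $k$~classes (each of the $n-k$ outside elements is assigned to one of the $k$ classes), and the orbit $IG$ must meet the transversals of every such partition, so
\[
|G|\;\geq\;|IG|\;\geq\;\frac{S(n,k)}{k^{n-k}},
\]
where $S(n,k)$ is the Stirling number of the second kind. A suitable choice of $k$ makes this lower bound eventually exceed $n^{1+\lfloor\log_2 n\rfloor}$, forcing $n \leq N_0$ for an explicit constant $N_0$. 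For each such $n$, the primitive subgroups of $\sym$ are enumerated from the GAP library and checked for the UTP individually, producing the list in~(i)--(v). The main obstacle is calibrating the counting inequality so that it decisively beats Mar\'oti's bound within a computable range: the Stirling ratio $S(n,k)/k^{n-k}$ grows only moderately, so sharpening by restricting to specific partition types---or combining constraints across several values of $k$ simultaneously---may be needed to pin down a manageable $N_0$.
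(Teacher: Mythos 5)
Your overall architecture matches the paper's: the \emph{if} direction via $k$-homogeneity of $\alt$ and $\sym$ plus machine verification for the small groups, and the converse by playing a counting lower bound forced by the universal transversal property against Mar\'oti's upper bound, then finishing computationally. But two steps as written would fail. First, your case analysis of Mar\'oti's trichotomy mishandles case~(b): Mar\'oti's first class is $(\alt[m])^r\leq G\leq \mathcal{S}_m\wr\mathcal{S}_r$ with $\mathcal{S}_m$ acting on $k$-subsets, and this \emph{includes} $r=1$, i.e.\ $\mathcal{A}_m$ or $\mathcal{S}_m$ acting on $k$-subsets of $\{1,\ldots,m\}$ with $k\geq 2$. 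These are not product-action groups, are not Mathieu groups, and their orders ($\geq m!/2$, with $n=\binom{m}{k}$) are not bounded by $n^{1+\lfloor\log_2 n\rfloor}$, so your argument says nothing about them; they need a separate elimination (e.g.\ by applying your counting bound to them directly, which does work since $m!\ll 2^{n/2}$). Moreover, your stated reason that product-action groups are non-synchronizing --- that ``the Cartesian block partition is preserved by $G$'' --- cannot be right: these groups are primitive, so they preserve no non-trivial partition. The correct argument (Neumann's Example~3.4, which the paper cites) exhibits a non-trivial partition and a transversal all of whose $G$-images remain transversals. The paper sidesteps this entire issue by using the cruder corollary $|G|<50n^{\sqrt n}$ together with the O'Nan--Scott theorem, ruling out affine groups of degree $>7$ by explicit line/plane configurations rather than by order bounds.

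Second, the quantitative core is left unresolved, and you correctly identify it as the main obstacle. Your bound $|G|\geq S(n,k)/k^{n-k}$ is valid but lossy: since $S(n,k)\leq\binom{n}{k}k^{n-k}$, it never exceeds $\binom{n}{k}$, and a careful estimate (using $\prod b_i\leq(n/k)^k$ for the block sizes of a $k$-class partition) gives roughly $\binom{n}{n/2}2^{-n/2}\approx 2^{n/2}/\sqrt{n}$ at $k=n/2$, which only overtakes $n^{1+\lfloor\log_2 n\rfloor}$ around $n\approx 100$. The refinement you anticipate (``restricting to specific partition types'') is exactly what the paper does: it counts only the $r$-\emph{singular} partitions ($r$ singleton classes and one large class), of which there are $\binom{n}{r}$, each $(r+1)$-set being a transversal for exactly $r+1$ of them. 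This yields the clean inequality $|G|(r+1)\geq\binom{n}{r}$, which with $r=\lfloor n/2\rfloor$ beats $50n^{\sqrt n}$ for all $n\geq 47$; the same inequality is then reused as a filter on the {\sf GAP} primitive groups library to cut the candidates down to the $32$ groups of Table~\ref{table2}, each killed by an explicit set/partition witness. Without this (or an equivalent) sharpening, your $N_0$ is attainable but the subsequent verification over all primitive groups of degree up to $N_0$ is substantially heavier, and a naive brute-force UTP check at degree near $100$ is infeasible.
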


The proof
of Theorem~\ref{superlemma}  
will be given in Section~\ref{classutg}.

\section{The proofs of the main results}\label{proofsec}

In this section we prove Theorems~\ref{main} and~\ref{th2}.
We start with a lemma that
links the universal transversal property with the classes of 
idempotent generated semigroups and regular semigroups. This lemma essentially 
allows us to prove one direction of the main theorems.

\begin{lem}\label{impliesutp} 
Let $G$ be a subgroup of $\sym$ such that one of the following properties holds:
\begin{itemize}
\item[(i)] 
$\genset{G, a}\setminus G$ is idempotent generated
for all $a\in \trans\setminus \sym$;
\item[(ii)] $\genset{G, a}\setminus G$ is regular
for all $a\in \trans\setminus \sym$;
\item[(iii)]
$\genset{a^g\mid g\in G}$ is idempotent generated for all  $a\in \trans\setminus \sym$.
\item[(iv)] $\genset{a^g\mid g\in G}$ is regular for all  $a\in \trans\setminus \sym$. 
\end{itemize}
Then $G$ has the universal transversal property.
\end{lem}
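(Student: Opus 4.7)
My approach would be to prove all four implications simultaneously via the contrapositive. Assuming $G$ fails the universal transversal property, Lemma~\ref{kgalem} supplies some $a\in\trans$ with $K_G(a)=\emptyset$; since any invertible $a$ trivially satisfies $e\in K_G(a)$, we may take $a\in\trans\setminus\sym$. Setting $r=\rank a$, the hypothesis rephrases as $\rank(aga)<r$ for every $g\in G$, and I would show that this single transformation $a$ witnesses the failure of each of (i)--(iv).

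The key technical observation driving everything is a rank-drop lemma for the two semigroups $S_1:=\genset{a^g\mid g\in G}$ and $S_2:=\genset{G,a}\setminus G$. Every element of $S_2$ can be written as a word $g_0 a g_1 a\cdots a g_k$ with $k\geq 1$ and $g_i\in G$, while every element of $S_1$ expands similarly as $g_1^{-1} a g_1 g_2^{-1} a g_2 \cdots g_k^{-1} a g_k$. In either form, if $k\geq 2$ the word contains an $aga$-subword with $g\in G$, which by hypothesis has rank strictly less than $r$, so the whole product does too. Consequently the rank-$r$ elements of $S_2$ are precisely those of the form $g_0 a g_1$ with $g_0,g_1\in G$, and the rank-$r$ elements of $S_1$ are precisely the conjugates $a^g$ with $g\in G$.

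For parts (ii) and (iv), I would argue by contradiction: if $a=aba$ with $b\in S_i$, then $a=aba$ forces $b$ to be injective on $\im a$, whence $\rank b\geq r$; combined with $\rank b\leq r$ this gives $\rank b=r$, so $b$ has the special form above. Expanding $aba$ then exhibits an $aga$-subword, so $\rank(aba)<r$, contradicting $\rank(aba)=r$. For parts (i) and (iii), suppose $S_i$ is idempotent generated and factor $a=e_1\cdots e_m$ with each $e_j$ idempotent in $S_i$; since rank is submultiplicative in any such product and $\rank a=r$, each $e_j$ must have rank exactly $r$ and therefore takes the special form. But squaring $g_0 a g_1$ or $a^g$ again produces an $aga$-subword and drops the rank below $r$, so $e_j^2\neq e_j$, a contradiction. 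Hence no rank-$r$ idempotent exists in $S_i$, and since products of elements of rank below $r$ stay below rank $r$, this rules out any factorisation of $a$ into idempotents.

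I do not anticipate a substantive obstacle: the whole argument is a clean exploitation of the single ``$aga$ forces a rank drop'' principle, applied uniformly across the two semigroups and the two structural properties, with no classification or machine computation required at this stage.
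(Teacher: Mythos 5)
Your proposal is correct and follows essentially the same route as the paper: both arguments pass to the contrapositive via Lemma~\ref{kgalem} to obtain $a$ with $K_G(a)=\emptyset$, and both exploit the single observation that any word containing an $aga$-subword with $g\in G$ drops rank, so that no idempotent of rank $\rank a$ exists and $a$ can be neither regular nor a product of idempotents. The only (immaterial) variation is in the regularity case, where the paper notes that $a=aba$ would make $ab$ a rank-$\rank a$ idempotent, while you instead pin down the form of $b$ itself and derive the rank drop directly.
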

\begin{proof}
Let us assume by contradiction that $G$ does not have the universal transversal
property, and show that assertions~(i)--(iv) are
not valid. By Lemma~\ref{kgalem}, there is some $a\in\trans$ such that 
$K_G(a)=\emptyset$. We
 note that $\genset{a^g\ |\ g\in G}\leq \genset{G,a}\setminus G$
and that $a\in \genset{a^g\ |\ g\in G}\cap \genset{G,a}\setminus G$. 

Let $b$ be an idempotent in $\genset{G, a}\setminus G$ and hence in $\genset{a^g\:|\:g\in G}$ by Lemma \ref{mcal}. Then $b$ is of the form
$g_1ag_2a\cdots g_{k-1}ag_k$ where $g_1,\ldots,g_k\in G$.  
We claim that $k\geq 3$. Indeed if $g_1ag_2$ is an idempotent with 
some $g_1,\ g_2\in G$, then $g_1ag_2g_1ag_2=g_1ag_2$ and so $ag_1g_2a=a$. 
In particular, $\rank ag_1g_2a=\rank a$, which is impossible as 
$K_G(a)=\emptyset$. Hence $b=g_1ag_2a\cdots g_{k-1}ag_k$ with $k\geq 3$, 
as claimed. Note that $\rank b\leq\rank ag_2a$ and,
as $g_2\not\in K_G(a)$, that $\rank ag_2a<\rank a$. 
Therefore $\rank b<\rank a$, and the element 
$a$ is not a member of  the semigroup generated by the idempotents
of $\genset{G,a}\setminus G$ or $\genset{a^g\:|\:g\in G}$.
Thus assertions~(i) and~(iii) do not hold.

To prove that assertions~(ii) and~(iv) fail, we show that $a$ is not a regular
element of $\genset{G, a}\setminus G$. 
Suppose as above that $a$ is regular and 
there is $b\in \genset{G, a}\setminus G$ such that $a=aba$. 
Then $\rank ab=\rank a$. In addition,
$ab=abab$, and so $ab$ is an idempotent.
We obtain a contradiction, as, by the previous paragraph, 
the semigroup 
$\genset{G, a}\setminus G$ has no idempotents with the same rank
as $a$. That is, $a$ is not a regular
element of $\genset{G, a}\setminus G$, and, since $\genset{a^g\:|\:g\in G}\leq \genset{G, a}\setminus G$,
it is not a regular element of  $\genset{a^g\mid g\in G}$. 
Therefore  
$\genset{G, a}\setminus G$ is not a regular semigroup,
and neither is its subsemigroup $\genset{a^g\mid g\in G}$.
\end{proof}

We can now prove  Theorem~\ref{main}. 
The symbol $[\alpha_1,\alpha_2,\ldots,\alpha_n]$ denotes the element of 
$\trans$ that maps $1\mapsto \alpha_1,\ 2\mapsto\alpha_2,\ldots,
n\mapsto\alpha_n$.

\begin{proof}[Proof of Theorem~$\ref{main}$]
If $\genset{G,a} \setminus G$ is idempotent generated,
then, by Lemma~\ref{mcal}, $\genset{a^g\mid g\in G}=\genset{G,a} \setminus G$, and so $\genset{a^g\mid g\in G}$ is also
idempotent generated.
Thus statement~(i) implies statement~(ii). 

Let us next show that statement~(ii) implies statement~(iii). 
By assumption, $\genset{a^g\ |\ g\in G}$ is idempotent generated for all
$a\in\trans\setminus\sym$. Hence Lemma~\ref{impliesutp} implies that  $G$ satisfies 
the universal transversal property. By Theorem~\ref{superlemma}, 
it suffices  to show, for $G\in\{C_5,\ \dihed{5},\ \agl(1,7),\ \pgl(2,7),\ \psl(2,8),\ \pgaml(2,8)\}$, that there exists some $a\in\trans\setminus\sym$
such that the semigroup $\genset{a^g\ |\ g\in G}$ is
not generated by idempotents. 
Using the {\sf GAP} computational algebra system, it is possible to show
that the required transformations $a$ are
$[1,3,2,2,2]$, $[1,2,3,3,3]$, $[1,2,3,3,3,3,3]$, 
$[6,2,3,4,6,6,6,6]$, $[1,2,3,5,4,5,4,4,5]$, and $[1,2,3,5,4,5,4,4,5]$,
respectively; see Section~\ref{computsection} and 
\cite{webpage} for further description of the computations.

The assertion that statement~(iii) implies statement~(i)
is verified as follows. The groups $\sym$ and $\alt$ satisfy statement~(i), by
Lemma~\ref{antigo}, 
so we may assume that $G$ is one of the groups 
$\agl(1,5)$, $\psl(2,5)$, $\pgl(2,5)$.
We used the computational algebra package {\sf GAP} to verify, 
for these groups, that for all $a\in\trans\setminus\sym$ the semigroup 
$\genset{G,\ a}\setminus G$ 
is idempotent generated.  See Section~\ref{computsection} and~\cite{webpage}
for the details.\end{proof}

Let us now prove Theorem~\ref{th2}.

\begin{proof}[Proof of Theorem~$\ref{th2}$]
First we prove that~(i) implies~(ii).  It suffice to prove that if $a\in \trans\setminus \sym$ such that $\genset{G,a}$ is regular, then $\genset{G, a}\setminus G$ is regular. 
Assume that $\genset{G,a}$ is regular for some $G\leq\sym$ and 
for some $a\in\trans\setminus\sym$. 
Then for each $u \in \genset{G,a}\setminus G$ 
there is some $v\in \genset{G,a}$ with $u=uvu$.
If $v\not\in G$, then $u$ is a regular element of $\genset{G,a}\setminus
G$, and so we may assume that $v\in G$. Then
$vuv\not\in G$ and 
$u(vuv)u=(uvu)vu=uvu=u$. Thus $u$ is regular in $\genset{G,a}\setminus
G$ also in this case, and so $\genset{G,a}\setminus
G$ is a regular semigroup, as claimed.

Next we prove that assertion~(ii) implies (iii). Suppose that
$G$ is a permutation group such that $\genset{G,a}\setminus G$ is
regular for all $a\in\trans\setminus\sym$. Then, by Lemma~\ref{impliesutp}, 
$G$ satisfies the universal transversal property, and so $K_G(a)\neq\emptyset$
for all $a\in\trans$. Let $a\in\trans\setminus\sym$, and let $b\in\genset{a^g\ | g\in G}$. Then 
$K_G(b)\neq\emptyset$, and hence~\cite[Theorem~2.3]{lmm} implies
that $b$ is regular in $\genset{b^g\ |\ g\in G}$. As 
$\genset{b^g\ |\ g\in G}\leq  \genset{a^g\ |\ g\in G}$, we find that
$b$ is regular in $\genset{a^g\ |\ g\in G}$, and so $\genset{a^g\ |\ g\in G}$ is a regular semigroup.

The fact that~(iii) implies~(iv) is a consequence of Lemma~\ref{impliesutp} and Theorem~\ref{superlemma}.

Finally, we prove that~(iv) implies~(i). Let $a\in\trans\setminus\sym$. 
We are required
to show, for $b\in \genset{G,a}$, that $b$ is regular in $\genset{G,a}$. 
As $G$ has the universal
transversal property, Lemma~\ref{kgalem} gives that $K_G(b)\neq\emptyset$. 
Thus, by~\cite[Theorem~2.3]{lmm}, the element $b$ is regular in $\genset{G,b}$. 
As $\genset{G,b}\leq\genset{G,a}$, we find that $b$ is regular in 
$\genset{G,a}$, as claimed.
\end{proof}

\section{The classification of Universal transversal groups}\label{classutg}

The proof of Theorem~\ref{superlemma} is given in this section.
It can be verified using the computational algebra system 
{\sf GAP}~\cite{GAP}
that the groups 
$C_5$, $\dihed{5}$, $\agl(1,5)$, $\psl(2,5)$, $\pgl(2,5)$, $\agl(1,7)$, $\pgl(2,7)$, $\psl(2,8)$,
$\pgaml(2,8)$
listed in the theorem
satisfy the universal transversal property;
 full details of the computation are available on the companion 
webpage~\cite{webpage} (see also Section~\ref{computsection}). A permutation group  acting on a set $X$ is said to be {\em $k$-homogeneous}
if it is transitive on the set of subsets of $X$ with size $k$. 
The alternating group $\alt$ is $k$-homogeneous for $k=1,2, 
\ldots, n$; see \cite[Exercise 2.1.4]{dixon}. 
Thus the $k$-homogeneous groups 
$\alt$ and  $\sym$ have the universal transversal property, and 
so the proof of one direction  of Theorem \ref{superlemma} is concluded.

Synchronizing groups were first introduced in~\cite{steinberg} and 
a combinatorial characterization was given in~\cite{ar}. 
A permutation group $G\leq S_n$ is said to be {\em synchronizing} if for every non-trivial partition $P$ (a partition is said to be {\em non-trivial} if 
it has at least $2$ and at most $n-1$ blocks) 
of $\{1,\ldots ,n\}$ and every transversal $S$ of $P$, there exists $g\in G$ such that $Sg$ is not a transversal for $P$.  Recall that a permutation group is said to be {\em primitive} if
no non-trivial 
partition of the underlying set is invariant under the group action; 
see also~\cite[Section~1.5]{dixon}. 
If $G$ preserves a partition $P$ 
of $\{1,\ldots,n\}$,  then any image of a transversal of $P$ is again a 
transversal.
Hence a synchronizing group is transitive and  primitive 
(see also~\cite[Introduction]{neumann}). 
\begin{lem}\label{unisynch}
Every universal transversal group is synchronizing, and hence 
such a group is
transitive and primitive.
\end{lem}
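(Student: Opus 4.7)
The plan is to prove the main claim that ``UTP implies synchronizing'' by contradiction; the ``transitive and primitive'' conclusions then follow from the remark before the lemma. Suppose $G$ has UTP and there exist a non-trivial partition $P = \{C_1, \ldots, C_k\}$ (with $2 \leq k \leq n-1$) and a transversal $S$ of $P$ with $SG \subseteq T_P$. First I will establish directly from UTP that $G$ is transitive and primitive, since both will be used below: transitivity follows by applying UTP to the $(n-1)$-subset $\{1,\ldots,n\} \setminus \{x\}$ and any $(n-1)$-partition whose unique 2-element class is a chosen pair $\{y, z\}$, which forces $xg \in \{y, z\}$ for some $g \in G$; primitivity follows by noting that any non-trivial $G$-invariant block partition with $b$ blocks of size $m$ would be contradicted by UTP applied to a $b$-subset that lies in a single block (when $m \geq b$) or occupies $\lceil b/m \rceil < b$ blocks (when $m < b$), since the $G$-image of such a subset occupies the same number of blocks and can never be a transversal of the block partition.

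For the main contradiction when $k = 2$, write $P = \{A, B\}$ and let $\Gamma$ be the graph on $\{1, \ldots, n\}$ with edge set $SG$. Every edge of $\Gamma$ crosses $\{A, B\}$, so $\Gamma$ is bipartite with bipartition $\{A, B\}$; since $SG$ is $G$-invariant, $G$ acts on $\Gamma$ by graph automorphisms, making the partition into connected components of $\Gamma$ a $G$-invariant partition of $\{1, \ldots, n\}$. Primitivity forces this partition to be trivial, and since $S \in SG$ provides at least one edge, $\Gamma$ cannot consist only of isolated vertices, so $\Gamma$ must be connected. A connected bipartite graph has a unique bipartition, so $G$ preserves the non-trivial partition $\{A, B\}$, contradicting primitivity.

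For the main contradiction when $k \geq 3$, I will construct a $k$-partition $P^*$ of $\{1,\ldots,n\}$ such that $T_P \cap T_{P^*} = \emptyset$; applying UTP to any element of $SG$ and $P^*$ then produces some $g' \in G$ whose image lies in $SG \cap T_{P^*} \subseteq T_P \cap T_{P^*} = \emptyset$, a contradiction. For the construction, pick three distinct indices $a, b, c \in \{1, \ldots, k\}$ with $|C_c| \geq 2$ (possible since $k \geq 3$ and the non-triviality of $P$ forces at least one class to have size $\geq 2$), then let $P^*$ have classes $C_a \cup C_b$, a proper splitting $C_c = C_c' \cup C_c''$ into two non-empty parts, and the remaining classes $C_d$ (for $d \notin \{a, b, c\}$) unchanged. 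In the bipartite incidence graph whose vertices are the classes of $P$ on one side and of $P^*$ on the other, with edges joining intersecting classes, both $C_a$ and $C_b$ have neighborhood equal only to $\{C_a \cup C_b\}$; Hall's marriage condition therefore fails for $\{C_a, C_b\}$, and no $k$-subset is a transversal of both $P$ and $P^*$. The main obstacle is the subcase in which $P$ has exactly one class of size $\geq 2$, which forces $C_c$ to be that class; one must then pick $a, b$ from the remaining singleton classes, which exist in sufficient number because $k \geq 3$.
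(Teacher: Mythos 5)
Your proof is correct, but it takes a longer and genuinely different route from the paper's. The paper handles all $k$ at once with a single construction: given two distinct points $\alpha,\beta$ in a common class of $P$, it builds a partition $P_1$ with $|P_1|=|P|$ having $\{\alpha\}$ and $\{\beta\}$ as singleton classes; every transversal of $P_1$ then contains both points and so fails to be a transversal of $P$, and one application of the universal transversal property to $(S,P_1)$ finishes the argument, with transitivity and primitivity coming for free from the remark that synchronizing groups have these properties. Your $k\ge 3$ case is the same idea in mirror image: you arrange $T_P\cap T_{P^*}=\emptyset$ by merging $C_a\cup C_b$ rather than by isolating two points, and the verification is immediate (a common transversal would meet $C_a\cup C_b$ both twice and once), so invoking Hall's condition is more machinery than needed. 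Where you genuinely diverge is in deriving transitivity and primitivity directly from the universal transversal property and then using primitivity to settle $k=2$ via connectivity of the orbital graph on $SG$ and the uniqueness of the bipartition of a connected bipartite graph. This extra work buys something real: the paper's $P_1$ only exists when $|P|\ge 3$ (for $|P|=2$ and $n>2$ there is no two-class partition of the whole set with two singleton classes), so your separate $k=2$ argument covers a case the paper's construction does not literally reach. Two points to tighten: from ``for every $x$ and every pair $\{y,z\}$ some $g$ sends $x$ into $\{y,z\}$'' you still need a line to conclude transitivity --- the orbit $xG$ meets every $2$-subset, hence $|X\setminus xG|\le 1$, and a leftover point $w$ would have orbit $\{w\}$, which misses some $2$-subset once $n\ge 3$; and your primitivity argument uses equal block sizes, which requires transitivity, so the order of these two sub-arguments must be as you have it and is worth making explicit.
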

\begin{proof}
Suppose that $G$ is a universal transversal group acting on a set $X$.
Let $P$ be a non-trivial partition of $X$ and let 
$S$ be a transversal for $P$. 
Let $\alpha,\ \beta\in X$ be two distinct elements in the same 
block of $P$.
Let $P_1$ be a partition of $X$ such that $\{\alpha\},\ \{\beta\}\in P_1$
and $|P|=|P_1|$. 
Now, by assumption, there exists $g\in G$ such that $Sg$ is a transversal for $P_1$ and hence $\alpha,\ \beta\in Sg$. Thus $Sg$ is not a transversal for $P$, which shows
that $G$ is synchronizing. Therefore
$G$ is transitive
and  primitive. 
\end{proof}

We note that it is possible to prove that a universal transversal group is 
transitive and primitive without using the concept of synchronizing groups.
However, we decided to include the proof above, as synchronizing groups will
play some further role in this paper.

A subgroup of $\sym$ is said to be {\em proper} if it does not contain 
$\alt$. 
Next we prove that a proper primitive group of large enough degree does not
satisfy the universal transversal property. 
Before proving this result 
we introduce some terminology. 
Let $r\in\mathbb{N}$. 
We say that a partition $P$ of a set is 
$r$-{\em singular} if it has $r+1$ classes, $r$ of which contain exactly $1$ element.
The number of $r$-singular partitions of $\{1,\ldots,n\}$ is 
clearly $\binom nr$. Further, if $R$ is a subset of $\{1,\ldots,n\}$
with $r+1$ elements, then the number of $r$-singular partitions
$P$ of $\{1,\ldots,n\}$ such that $R$ is a transversal for $P$ is $r+1$.

\begin{lem}\label{properprim} If $G$ is a proper primitive group with degree at least $47$, then $G$ does not satisfy the universal transversal property.
\end{lem}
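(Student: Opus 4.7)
The plan is to use a double counting argument comparing the $G$-orbit of a subset $I$ of suitable size against the number of $r$-singular partitions, together with Maróti's bound on $|G|$, to derive a contradiction.

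Set $r=\lfloor n/2\rfloor$ and fix any $I\subseteq\{1,\ldots,n\}$ with $|I|=r+1$. As observed in the paragraph preceding the lemma, there are exactly $\binom{n}{r}$ $r$-singular partitions of $\{1,\ldots,n\}$, and any fixed $(r+1)$-subset $S$ is a transversal of exactly $r+1$ of them (the ones obtained by choosing which element of $S$ lies in the unique non-singleton class). If $G$ had the universal transversal property, then every $r$-singular partition $P$ would admit some $g\in G$ with $Ig$ a transversal for $P$; since each of the at most $|G|$ images $Ig$ contributes to at most $r+1$ partitions, the necessary inequality is
\[
(r+1)\,|G|\ \geq\ \binom{n}{r}.
\]
Thus, to prove the lemma it is enough to show that this inequality fails for every proper primitive group $G$ of degree $n\geq 47$.

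To bound $|G|$ I would invoke Maróti's theorem~\cite{maroti}: a primitive group of degree $n$ either (a) lies in a wreath product $\sym[\ell]\wr\sym[k]$ acting on $\ell^{k}$ points with $\ell\geq 5$ and $k\geq 1$ and contains $(\alt[\ell])^k$, (b) is one of the Mathieu groups $M_{11},M_{12},M_{23},M_{24}$, or (c) satisfies $|G|\leq n^{1+\lfloor\log_{2}n\rfloor}$. Since we assume $G$ is proper, case (a) forces $k\geq 2$; since the Mathieu groups all have degree at most $24<47$, case (b) is excluded.

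In the generic case (c), I would estimate the right-hand side of the counting inequality via $\binom{n}{\lfloor n/2\rfloor}\geq 2^{n}/(n+1)$, so that $\binom{n}{r}/(r+1)\geq 2^{n+1}/((n+1)(n+2))$. Taking logarithms, the inequality $(r+1)|G|<\binom{n}{r}$ reduces to checking $n-O(\log n)>(\log_{2}n)^{2}+\log_{2}n$, and an explicit verification at $n=47$ (where $47^{6}\approx 1.08\times 10^{10}$ while $\binom{47}{23}/24\approx 6.7\times 10^{11}$) followed by the monotonicity of the gap for larger $n$ finishes this case. For the wreath product case (a), I would use the rougher bound $|G|\leq(\ell!)^{k}k!\leq\ell^{\ell k}\cdot k!$ together with $n=\ell^{k}\geq 47$, $\ell\geq 5$, $k\geq 2$; the double-exponential gap between $2^{n}=2^{\ell^{k}}$ on the right and $\ell^{\ell k}$ on the left dominates, and a case check for the smallest product-action degrees above $47$ (starting with $(\ell,k)=(7,2)$, $n=49$) completes the argument.

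The main obstacle will be the numerical tightness of the bound near $n=47$: in case (c) of Maróti's theorem the factor $n^{1+\lfloor\log_{2}n\rfloor}$ is uncomfortably close to $\binom{n}{\lfloor n/2\rfloor}/(\lfloor n/2\rfloor+1)$, and a careless estimate would push the threshold of the lemma above $47$. I would therefore carry out the comparison at $n=47$ with explicit numerics and then argue monotonicity separately, rather than try to produce a single clean closed-form inequality valid for all $n\geq 47$.
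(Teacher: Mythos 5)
Your counting step is exactly the one the paper uses: the inequality $(r+1)|G|\geq\binom{n}{r}$ is precisely the paper's inequality \eqref{utpineq}, obtained by the same double count of $r$-singular partitions against the $G$-orbit of an $(r+1)$-set. The genuine problem is in how you invoke Mar\'oti's theorem. In the product-action case of that trichotomy the base group $S_\ell$ acts on the $m$-element subsets of $\{1,\ldots,\ell\}$, so the degree is $\binom{\ell}{m}^{k}$, not $\ell^{k}$; and properness does \emph{not} force $k\geq 2$, because $k=1$ with $m\geq 2$ already yields proper primitive groups ($S_\ell$ or $A_\ell$ acting on $m$-subsets) of order up to $\ell!$. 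These orders eventually exceed $n^{1+\lfloor\log_2 n\rfloor}$ (for instance $S_{30}$ on $2$-subsets has degree $435$ and order $30!\approx 2.6\times 10^{32}$, while $435^{9}\approx 5\times10^{23}$), so such groups are genuinely excluded from your case (c) and must be handled in case (a). As written, your case (a) covers neither them nor the $k\geq 2$ product actions of degree $\binom{\ell}{m}^{k}$ with $m\geq 2$, so the case analysis has a hole. The lemma is still true for these groups --- $\ell!^{\,k}k!$ is far smaller than $\binom{n}{r}/(r+1)$ when $n=\binom{\ell}{m}^{k}\geq 47$ --- but that estimate has to be stated and proved.

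The paper sidesteps this entirely by quoting Mar\'oti's Corollary~1.1 rather than the trichotomy: every proper primitive group of degree $n$ satisfies $|G|<50n^{\sqrt{n}}$, which is the paper's inequality \eqref{maroti}. Feeding this single uniform bound into \eqref{utpineq} reduces the lemma to the purely numerical assertion that $\binom{n}{r}>50n^{\sqrt{n}}(r+1)$ for $r\approx n/2$ and $n\geq 47$, verified by an explicit check at small degrees followed by a monotonicity (induction) argument --- the same shape as your case (c), with $n^{\sqrt{n}}$ in place of $n^{1+\lfloor\log_2 n\rfloor}$. I would advise doing the same, as it eliminates the case analysis altogether; if you insist on keeping the trichotomy, you must restate case (a) correctly and also make sure your monotonicity argument in case (c) absorbs the upward jumps of $n^{1+\lfloor\log_2 n\rfloor}$ at powers of $2$.
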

\begin{proof}
It is known that proper primitive groups have `small' orders.
More precisely, in \cite[Corollary 1.1]{maroti} it is proved that if $G\leq
\sym$ is a proper primitive group, then
\begin{equation}\label{maroti}
|G|<50n^{\sqrt{n}}.
\end{equation}
Thus the strategy of the proof is to argue that universal transversal groups
must have order at least $50n^{\sqrt{n}}$. The first step is to find a lower bound for the order of a group with the universal transversal property.

 Suppose $R$ is an $(r+1)$-element set. Then the orbit of $R$ under $G$ must contain a transversal for every $r$-singular partition of $\{1,\ldots ,n\}$.  
As noted above, 
the number of $r$-singular partitions of $\{1,...,n\}$ is $\binom{n}{r}$ 
and each $(r+1)$-element set is a transversal for exactly $(r+1)$ 
$r$-singular partitions of $\{1,\ldots ,n\}$. Hence if $G$ has the 
universal transversal property, 
then 
\begin{equation}\label{utpineq}
|G|(r+1)\geq \binom{n}{r}
  \end{equation}
must hold for all $r\in\{1,\ldots,n-1\}$, and hence
\begin{equation}\label{eq1}
\binom nr\leq 50n^{\sqrt{n}}(r+1).
\end{equation}
We complete the proof by showing that~\eqref{eq1} fails to hold for $n\geq 47$ 
with $r=n/2$
when $n$ is even and $r=(n+1)/2$ when $n$ is odd.
That is, we show that
\begin{equation}\label{eq0.1}
50(2r)^{\sqrt{2r}}(r+1)<\binom{2r}{r}
\end{equation}
and that
\begin{equation}\label{eq0.2}
50(2r+1)^{\sqrt{2r+1}}(r+1)<\binom{2r+1}{r}
\end{equation}
holds for sufficiently large $r$. 
We only prove~\eqref{eq0.1}, as the proof of~\eqref{eq0.2} is very similar.
Let $A(r)$ and $B(r)$ denote the left-hand side and 
the right-hand side of~\eqref{eq0.1}, respectively. 
We use induction on $r$. It is easy to
compute that $A(r)<B(r)$ for 
$r\in \{24,\ldots,46
\}$. Assume that $r\geq 46$, that $A(r)<B(r)$, and let us show that
$A(r+1)<B(r+1)$.  We claim that $A(r+1)/A(r)<B(r+1)/B(r)$ which implies
immediately that $A(r+1)<B(r+1)$. First, note that 
$B(r+1)/B(r)=(2r+1)(2r+2)/(r+1)^2=2(2r+1)/(r+1)$, and that
$$
\frac{A(r+1)}{A(r)}=\frac{50(2r+2)^{\sqrt{2r+2}}(r+2)}{50(2r)^{\sqrt{2r}}(r+1)}
\leq \frac{2(2r+2)^{\sqrt{2r+2}}}{(2r)^{\sqrt{2r}}}.
$$
Hence it suffices to show  that 
\begin{equation}\label{eq2}
\frac{(2r+2)^{\sqrt{2r+2}}}{(2r)^{\sqrt{2r}}}<\frac{2r+1}{r+1}.
\end{equation}
It is easy to see that~\eqref{eq2} holds for $r=46$. The fact that it holds 
for $r\geq 47$ follows from the observation that the left-hand side 
of~\eqref{eq2} is decreasing, while the right-hand side is increasing. 
The latter of these claims is trivial. For the former, the derivative of the
function $x\mapsto (2x+2)^{\sqrt{2x+2}}/{2x^{\sqrt{2x}}}$ is 
\begin{multline*}
x\mapsto
C(x)\left(\frac{\sqrt x\sqrt{x+1}\log(x+1)+\sqrt{x}\sqrt{x+1}(\log 2+2)}{x^{(2\sqrt{2}\sqrt{x}+1)/2}(\sqrt{2}x+\sqrt{2})2^{\sqrt{2}\sqrt{x}}}\right.\\
\left.-\frac{(x+1)\log x+(\log 2+2)(x+1)}{x^{(2\sqrt{2}\sqrt{x}+1)/2}(\sqrt{2}x+\sqrt{2})2^{\sqrt{2}\sqrt{x}}}\right),
\end{multline*}
where $C(x)=(x+1)^{\sqrt 2\sqrt{x+1}}2^{\sqrt 2\sqrt{x+1}}$. 
Since $\sqrt x\sqrt{x+1}\log(x+1)<(x+1)\log x$
and $\sqrt x\sqrt{x+1}<(x+1)$, we obtain that this derivative
is negative, which shows that the left-hand side 
of~\eqref{eq2} is decreasing, as claimed.
\end{proof}

As the previous lemma gives a practical upper  bound for the degree of a
proper universal transversal group, we could  finish the classification
of such groups using computer calculation only. However, using the structure 
theorem of finite primitive permutation groups and some elementary 
combinatorics, we can significantly 
reduce the amount of computer calculation that is 
required to prove Theorem~\ref{superlemma}.
Primitive 
permutation groups are described by the O'Nan-Scott Theorem that divides these
group into several classes. Statements of this theorem can be found in~\cite[Section~4.8]{dixon} and in~\cite[Sections~4.4--4.5]{cam}, while 
in~\cite{bpsch} there is a detailed comparison of the different 
versions of the theorem that can be found in the literature.
Since the order of a non-abelian finite simple group is at least~60,
combining the O'Nan-Scott Theorem with the bound in Lemma~\ref{properprim}
gives that a proper universal transversal group is either an almost 
simple group, an affine group, or a 
subgroup of a wreath product in product
action. 

A finite group is said to be {\em almost simple} 
if it has a unique minimal
normal subgroup which is a non-abelian simple group. Almost simple
primitive groups form a class of primitive groups in the O'Nan-Scott Theorem.
If $G$ is a permutation group acting on 
$X$, then the wreath product $W=G\wr\sym$ can be considered as a 
permutation group acting on the cartesian product $X^n$. 
A primitive group of product action type is a suitable 
subgroup of such a wreath
product $W$ in the case when $G$ is an almost simple primitive group.

A primitive permutation group $G$ is said to be {\em affine} if it has
an abelian normal subgroup.
Affine primitive groups can be characterized as follows. Let $V$ be an 
$n$-dimensional vector space over a field $\mathbb F_p$ of $p$ elements for 
some prime $p$, and let $H$ be a subgroup of the group $\mbox{GL}(V)$ of
invertible linear transformations of $V$. Every element $v\in V$ defines a 
permutation $\tau_v$ of $V$ where $\tau_v:u\mapsto u+v$ for all $u\in V$. The collection $T$
of the $\tau_v$ is a subgroup of the full symmetric group
$\mbox{Sym}\,V$ isomorphic to the 
additive group of $V$. Similarly, the elements of $H$ can be considered
as permutations of $V$, and so $H$ can also be viewed as a subgroup of 
$\mbox{Sym}\,V$. It is easy to see that $H$ normalizes $T$, and so 
their product $TH$ is a subgroup of $\mbox{Sym}\,V$. 
As $T$ is transitive, so is $TH$. Further,
$TH$ is primitive if and only if $H$ is irreducible; that is, no 
non-trivial, proper subspace of $V$ is invariant under $H$. 
In this case, as $T$ is an abelian normal subgroup of $TH$, the primitive group $TH$
is affine.
Conversely, by~\cite[Theorem~4.7A]{dixon}, every affine primitive group
is permutationally isomorphic to a group of the form $TH$.
Permutational isomorphism is defined in~\cite[page~17]{dixon}. 
Permutationally isomorphic groups are essentially the same except for the 
labeling of the points on which they act.
In particular, the degree of an affine primitive group is a prime-power.
For each prime-power $p^k$ there is a largest affine group with
degree $p^k$ constructed as
follows. Let $V$ be the $k$-dimensional vector space over $\mathbb F_p$ and let
$T$ denote the subgroup formed by the $\tau_v$ as defined above. 
Then the group $T\mbox{GL}(V)$ is called the {\em affine general 
linear group} and is denoted by $\mbox{AGL}(k,p)$. Every affine
primitive group with degree $p^k$ is a subgroup of $\mbox{AGL}(k,p)$.

\begin{lem}\label{utpasth}
If $G$ is a permutation group of degree $n$ with the universal transversal property, then one of the following
must hold:
\begin{enumerate}
\item[(i)] $n\leq 4$;
\item[(ii)] $G\leq \mbox{AGL}(1,p)$ with $p\in\{5,\ 7\}$;
\item[(iii)] $G$ is an almost simple 
primitive group.
\end{enumerate}
\end{lem}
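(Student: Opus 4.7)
The plan is to cascade three results: Lemma \ref{unisynch} reduces $G$ to a primitive group, Lemma \ref{properprim} bounds the degree when $G$ does not contain $\alt$, and the O'Nan-Scott Theorem together with the bound (\ref{maroti}) prunes the remaining O'Nan-Scott types until only the three cases of the lemma survive.

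First, by Lemma \ref{unisynch}, $G$ is primitive. If $n\leq 4$ we are in case (i), so assume $n\geq 5$. If $G$ contains $\alt$, then $G\in\{\alt,\sym\}$ is almost simple, putting us in case (iii). Hence I may assume $G$ is a proper primitive group, and Lemma \ref{properprim} then yields $n\leq 46$.

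Next I would invoke the O'Nan-Scott Theorem. Because every non-abelian finite simple group has order at least $60$, groups of diagonal type and twisted wreath type have socle of order at least $60^2$, and one checks from the standard formulas for the degrees of these types that no such $G$ of degree at most $46$ can arise. A product action primitive group acting on $\Delta^k$ with $k\geq 2$ has degree $|\Delta|^k$, where $|\Delta|\geq 5$ is the degree of an almost simple primitive group on $\Delta$; so the candidate degrees at most $46$ are $n\in\{25,36\}$. In each such case $G$ sits in a relatively small wreath product, and the counting inequality (\ref{utpineq}), applied with $r=\lfloor n/2\rfloor$, gives $|G|(r+1)<\binom{n}{r}$, contradicting the universal transversal property.

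This leaves the almost simple type (which is (iii)) and the affine type $G\leq\agl(k,p)$ with $n=p^k\leq 43$. If $k\geq 2$, then $n\in\{4,8,9,16,25,27,32\}$; the case $n=4$ is covered by (i), and for $n\geq 8$ the inequality (\ref{utpineq}) with $r\approx n/2$, combined with the crude upper bound $|G|\leq|\agl(k,p)|$, produces a contradiction. If $k=1$, then $|G|\leq p(p-1)$, and (\ref{utpineq}) with $r=\lfloor p/2\rfloor$ eliminates every prime $p$ with $13\leq p\leq 43$. The cases $p\leq 3$ are absorbed into (i), while $p=11$ requires a sharper argument: one exhibits a $4$-set $I$ whose $\agl(1,11)$-orbit fails to meet the transversals of some $3$-singular partition, or verifies this failure by a direct computation. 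The surviving primes $p\in\{5,7\}$ give case (ii). The main obstacle is precisely the treatment of $\agl(1,11)$: because this group is sharply $2$-transitive but not $3$-homogeneous, the blunt counting inequality (\ref{utpineq}) is too weak, and a subtler orbit-theoretic argument or a small computer check is needed.
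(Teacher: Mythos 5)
Your overall architecture coincides with the paper's: Lemma \ref{unisynch} gives primitivity, Lemma \ref{properprim} caps the degree of a proper group at $46$, and the O'Nan--Scott Theorem leaves only the almost simple, product action, and affine types. Your counting disposal of the product action degrees $25$ and $36$ is sound (the paper instead quotes the fact that product-action groups are non-synchronizing and applies Lemma \ref{unisynch}), and the counting inequality \eqref{utpineq} does eliminate the one-dimensional affine groups for primes $13\leq p\leq 43$ and the affine degrees $25$ and $27$.

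There is, however, a genuine gap in the claim that for $k\geq 2$ and $n\geq 8$ the inequality \eqref{utpineq} combined with $|G|\leq|\agl(k,p)|$ yields a contradiction: it does not for $n\in\{8,9,16,32\}$. At degree $8$ the largest value of $\binom{8}{r}/(r+1)$ is $\binom{8}{4}/5=14$, whereas $|\agl(3,2)|=1344$; in fact no lower bound on $|G|$ can possibly exclude $\agl(3,2)$, because $\pgl(2,7)$, of order $336<1344$, \emph{is} a universal transversal group of degree $8$, so $\agl(3,2)$ satisfies every order inequality that $\pgl(2,7)$ satisfies. Likewise $|\agl(2,3)|=432$ against $\binom{9}{4}/5\approx 25$, $|\agl(4,2)|=322560$ against $\binom{16}{8}/9=1430$, and $|\agl(5,2)|\approx 3.2\times 10^{8}$ against $\binom{32}{16}/17\approx 3.5\times 10^{7}$. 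The paper eliminates these cases not by counting but by explicit incidence-geometric constructions: for $p=2$, $\dim V\geq 4$ and $p=3$, $\dim V\geq 3$ it takes a partition whose four singleton classes are coplanar points and a $5$-set no four of whose points are coplanar, so that no image of the $5$-set can be a transversal; the residual degrees $8$ and $9$ are settled by an explicit set and partition. You would need to supply arguments of this kind to close the case $k\geq 2$. The same remark applies to $\agl(1,11)$, which you correctly flag but leave unresolved: the paper's device is that the set $\{0,1,3,4\}\subseteq\mathbb F_p$ contains no three-term arithmetic progression for $p\geq 11$, affine maps preserve such progressions, yet any transversal of $\{\{0\},\{1\},\{2\},\mathbb F_p\setminus\{0,1,2\}\}$ must contain the progression $0,1,2$ --- exactly the sharper argument you gesture at without producing.
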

\begin{proof}
By Lemmas~\ref{unisynch} and~\ref{properprim}, 
a group with the universal  transversal property is
synchronizing, and hence it is primitive of degree at most~46.
As noted above, a primitive group of such  small degree is either
almost simple, or a subgroup of a wreath product, or affine.
As subgroups of wreath products in  product action are non-synchronizing (see~\cite[Example~3.4]{neumann}), 
they cannot have the universal transversal property, by Lemma~\ref{unisynch}. 
Since an affine primitive group  of degree $p^k$ is a subgroup of 
$\mbox{AGL}(k,p)$, and there is no affine group of degree~6, the statement
of the theorem is valid whenever $n\leq 7$. 
Thus we may assume without loss of generality that $n> 7$ 
and we are required to show that an affine 
primitive group  with degree~$n$ does not have the 
universal transversal property.

Assume that $G$ is a primitive group of affine type acting on a vector space
$V=\mathbb F_p^k$ for some prime $p$ and integer $k$. A translate of a 
one-dimensional subspace in $V$ is said to be a {\em line} and a translate
of a 2-dimensional subspace is said to be a {\em plane}. The action of 
$G$ on $V$ preserves the set of lines and the set of planes.
Suppose first 
that $p$ is at least~$5$ and $\dim V\geq 2$. 
In this case a line has at least 5 elements and there are three vectors which
do not lie on a common line. Choose 3 vectors $v_1$, $v_2$, $v_3$ 
which do not lie on a common line
and let $P$ be the partition 
$$
\{\{v_1\},\{v_2\},\{v_3\},V\setminus\{v_1,v_2,v_3\}\}.
$$
Now let $u_1$, $u_2$, $u_3$, $u_4$ be 4 vectors on a common line and set 
$$
S=\{ u_1,\ u_2,\ u_3,\ u_4\}.
$$
If $g$ is an element of $G$, then the image $Sg$ contains 4 vectors which lie
on a common line. On the other hand, if $R$ is a transversal for $P$, then $R$ must
contain $v_1$, $v_2$, $v_3$ and these do not lie on a common line. Thus 
no image $Sg$ of $S$ can be a transversal of $P$, and so $G$ does not have the
universal transversal property.

Suppose now that either $p=2$ and $\dim V\geq 4$ or $p=3$ and $\dim V\geq 3$. 
In this 
case 
we choose four vectors $v_1$, $v_2$, $v_3$, and $v_4$ 
on a common plane
and choose 5 vectors 
$u_1$, $u_2$, $u_3$, $u_4$, $u_5$ so that no four of them lie on a 
common plane. If $p=3$, then we may choose $u_1=0$, $u_2=b_1$, $u_3=b_2$, $u_4=b_3$, 
$u_5=b_1+b_2+b_3$, 
while if $p=2$ then $u_1=0$, $u_2=b_1$, $u_3=b_2$, $u_4=b_3$ and $u_5=b_4$
are suitable where $b_1,b_2,\ldots,b_k$ is a basis of $V$.
Then an argument similar to the one 
above shows 
that no image of $\{u_1,\ u_2,\ u_3,\ u_4,\ u_5\}$ by any element of $G$ is a transversal for the partition
$$
\{\{v_1\},\{v_2\},\{v_3\},\{v_4\},V\setminus\{v_1,v_2,v_3,v_4\}\}.
$$


Assume now that $\dim V=1$ and let $p\geq 11$. We identify $V$ with 
$\mathbb F_p$. Set 
$$
P=\{\{0\},\{1\},\{2\},\mathbb F_p\setminus\{0,1,2\}\}
$$
and 
$$
S=\{0,1,3,4\}.
$$
If $g\in G$, then there are $a,\ b\in\mathbb F_p$ such that $xg=(x+a)b=xb+ab$
for all $x\in \mathbb F_p$. Thus $0g=ab$, $1g=b+ab$, $3g=3b+ab$, $4g=4b+ab$. 
If $Sg=\{ab,b+ab,3b+ab,4b+ab\}$ is a transversal for $P$, then there are three 
elements $x$, $y$, and $z$ of $Sg$ such that $x-y=y-z$. However, using that $p\geq 11$, inspecting all 24 possibilities for $x$, $y$, and $z$, 
we find that it is not possible to choose such elements.
  
The remaining cases $p=2,\ \dim V=3$  and $p=3,\ \dim V=2$ can be handled
as follows. Let $G\leq\mbox{AGL}(3,2)$, then $G$ can be viewed as a 
group acting on the 3-dimensional vector space $V$ over $\mathbb F_2$. 
It is not hard to verify that the orbit of the subset
$$
\{(0,0,0),(0,0,1),(0,1,0),(0,1,1)\}
$$
does not contain a transversal for the partition
$$
\{\{(0,0,0),(0,0,1)\},\{(0,1,0),(0,1,1),(1,0,0),(1,0,1)\},\{(1,1,0)\},\{(1,1,1)\}\}.
$$
The last case is $G\leq\mbox{AGL}(2,3)$ and hand computation can show that
the orbit of the subset
$$
\{(0,0),(0,1),(0,2)\}
$$
does not contain a transversal for the partition
$$
\{\{(0,0)\},\{(0,1),(0,2),(1,2),(2,1)\},\{(1,0),(2,0),(1,1),(2,2)\}\}.
$$
The proof is now complete. \end{proof}

Now we are ready to prove Theorem~\ref{superlemma}.

{\footnotesize
\begin{table}
\begin{tabular}[f]{|l|l|l|l|l|}
\hline
Deg&Group(s)&{\sf GAP} id.&Set&Partition\\
\hline
$7$&$\psl(3,2)$&5&$\{1,2,4\}$&$\{1\},\{2,3,4,7\},\{5,6\}$\\
& $7:3$ & 3 & $\{1,2,4,7\}$&$\{1\},\{2\},\{3\},\{4,5,6,7\}$\\
&$\dihed{7}$&2&$\{1,3,7\}$&$\{1\},\{2\},\{3,4,5,6,7\}$\\
\hline
$8$&$\psl(2, 7)$&4&$ \{1,2,3,5\}$&$\{1\},\{2\},\{3,4,5,7\},\{6,8\}$\\
\hline
$10$&$\mathcal{A}_{5}$,  $\mathcal{S}_{5}$ & 1, 2 &$\{1,2,3,5,6\}$&$\{1\},\dots,\{5\},\{6,\ldots,10\}$\\
 &  $\psl(2, 9),$ $\pgl(2, 9)$ & 3, 4 & & \\
  & $\mathcal{S}_{6}$,  $\M(10)$ & 5, 6 & & \\
 & $	\pgaml(2, 9)$ & 7 && \\
\hline
$11$&$\psl(2, 11)$&5&$\{ 1, 2, 3, 5 \}$&$\{1\},\{2\},\{3,\ldots,11\},\{9\}$\\
&$ \M(11) $&6&$\{1,2,3,4,6\}$&$\{1\},\{2\},\{3\},\{4,5,6,$\\
&&&& \hfill$7,10,11\}, \{8,9\}$\\
\hline
$12$&$\M(12)$&2&$\{1,\ldots,6\}$&$\{1,\ldots,6\},\{7,8\},\{9\},\ldots,$\\
&&&&\hfill$\{12\}$\\
&$\M(11)$&1&$\{1,2,3,4,11,12\}$&$\{1\},\ldots,\{5\},\{6,\ldots,12\}$\\
&$\pgl(2, 11),\psl(2, 11)$&4, 3&$ \{1,2,3,4,6,7\}$&$\{1\},\ldots ,\{5\},\{6,\ldots,12\}$\\
\hline
$13$&$\psl(3, 3)$& 7 &$\{1,\ldots,5,7,8\}$& $\{1\},\ldots, \{6\},\{7,\ldots,13\}$\\
\hline
$14$&$\pgl(2,13),\psl(2,13)$&2, 1 &$\{1,\ldots,6,9,12\}$&$\{1\},\ldots,\{7\},\{8,\ldots,14\}$\\
\hline
$15$&$\psl(4, 2)$, $\mathcal{A}_{7}$&4, 1&$\{1,\ldots,6,8,12\}$&$\{1\},\ldots,\{7\},\{8,\ldots,15\}$\\
\hline
$17$&$\psl(2, 2^4)$, $\psl(2, 2^4):4$&6,8&$\{1,\ldots,7,11,14\}$&$\{1\},\ldots,\{8\},\{9,\ldots,17\}$\\
&$\psl(2, 2^4):2$&7&$\{8,\ldots,14,16,17\}$&$\{1\},\ldots,\{8\},\{9,\ldots,17\}$\\
\hline
$18$&$\pgl(2,17)$&2&$\{1,\ldots,8,10,11\}$&$\{1\},\ldots,\{9\}, \{10,\ldots,18\}$\\
\hline
$21$&$\psigl(3, 4)$, $\pgl(3, 4)$,&5, 6 &$\{1,\ldots,9,11,13\}$&$\{1\},\ldots, \{10\}, \{11,\ldots,21\}$\\
&$\pgaml(3, 4)$ & 7& & \\
\hline
$22$&$\M(22)$, $\M(22):2$&1, 2&$\{1,\ldots,10,12,15\}$&$\{1\},\ldots, \{11\},\{12,\ldots,22\}$\\
\hline
$23$&$\M(23)$&5&$\{1,\ldots,5,8,11\}$&$\{1\},\ldots,\{6\},\{7,\ldots, 23\}$\\
\hline
\end{tabular}\vspace{\baselineskip}
\caption{}\label{table2}
\end{table}}

\begin{proof}[The proof of Theorem~$\ref{superlemma}$] 
Suppose that $G\leq\sym$ is a proper primitive permutation group that satisfies
the universal transversal property. We may assume that $G$ is not listed
in Theorem~\ref{superlemma}. 
A primitive group of degree~3 or~4 is either an alternating 
or a symmetric group. A primitive group of degree 5, is either 
an alternating or symmetric group, or listed in 
Theorem~\ref{superlemma}(i). Thus $G$ must either be a subgroup of 
$\mbox{AGL}(1,7)$  or an almost simple group. 
Further, we proved in Lemma~\ref{properprim}, 
that the order of $G$ must satisfy~\eqref{utpineq} with $r=\lfloor n/2\rfloor$. 
Using the primitive groups library of {\sf GAP}, we found that there are 
32 such groups and they are listed in the second column of 
Table~\ref{table2}. The third column of the table contains the catalogue 
number of the groups in the primitive groups library of {\sf GAP}. For instance,
the group $\mbox{PSL}(3,2)$ can be accessed as {\tt PrimitiveGroup( 7, 5 )}.
In order to complete the proof of Theorem \ref{superlemma} 
it is necessary to find a partition $P$ and a subset $S$ of 
$\{1,2,\ldots, n\}$ such that no element in $\orb SG$ is a transversal for $P$. 
Such subsets and partitions can be found in Table~\ref{table2}. 
\end{proof}


\section{Computations}\label{computsection}

In this section we give a brief description of the methods used to perform the various computations  which are used above. 
More specifically, the following were verified computationally using {\sf GAP}~\cite{GAP}.
\begin{enumerate}
\item[\text (a)] In Theorem~\ref{main}, assertion (ii) implies assertion (iii): 
if $G$ is one of the groups  $C_5$, $D_5$, $\agl(1,7)$, $\pgl(2,7)$, $\psl(2,8)$, $\pgaml(2,8)$, then there exists an $a\in \trans\setminus \sym$ such that 
$\genset{G,a} \setminus G$ is not idempotent generated.
\item[\text (b)] In Theorem~\ref{main}, assertion~(iii) implies assertion~(i): if $G$ is one of the groups $\agl(1,5)$, $\psl(2,5)$, $\pgl(2,5)$, then $\genset{G,a} \setminus G$ is idempotent generated for all $a\in \trans\setminus \sym$.
\item[\text (c)] One direction in the proof of Theorem~\ref{superlemma}: if $G$ is any of the groups listed in Theorem 2.7 excluding $\alt$ and $\sym$, then $G$ has the universal transversal property.
\item[\text (d)] Table~\ref{table2}: if $G$ is any of the groups listed in Table 1, then $G$ does not have the universal transversal property. 
\end{enumerate}
We prepared a companion webpage~\cite{webpage} that contains full details 
of these computations. The procedures that were used in these computations
were collected into a~{\sf GAP} package, so that the reader can easily 
reproduce these computations herself. In addition to the {\sf GAP} package, 
the webpage contains detailed log files of the  computations that were necessary
to verify the statements above.

Parts (c) and (d) can be verified using {\sf GAP} by performing a brute force search. More precisely, if $G$ is a permutation group of degree $n$, then for every subset $I$ of $\{1,2,\ldots, n\}$ and  for every partition $P$ of $\{1,2,\ldots, n\}$ with $|I|$ classes, we verified that the orbit of $I$ under $G$ contained a transversal of $P$.  These computations are feasible due to the small degrees of the groups under consideration and the efficient methods in {\sf GAP} for computing with permutation groups. 

The idempotents with a specific rank in $\genset{G, a}\setminus G$  can be found using the simple orbit algorithm described in  \cite{Linton1998aa} and \cite{Linton2002aa}. Similarly simple orbit calculations, described in the same papers,  can be used to test membership in transformation semigroups. As such, the condition of the next lemma, equivalent to  $\genset{G, a}\setminus G$ being idempotent generated, can be verified efficiently using {\sf GAP}.

\begin{lem}\label{reduce2}
Let   $a\in\trans\setminus \sym$ and let $E$ denote the set of idempotents 
of $\genset{G, a}\setminus G$ with rank equal to $\rank a$. Then $\genset{G, a}\setminus G$ is idempotent generated if and only if every element
 of $Ga G$ is contained in $\genset{E}$.
\end{lem}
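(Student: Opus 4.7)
The plan is to reduce the question of idempotent generation to a much smaller test, by exploiting the fact that $\rank a$ is the maximum rank occurring in $\genset{G,a}\setminus G$.

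The key observation is that since $\rank(xy) \leq \min(\rank x, \rank y)$ for any transformations $x, y$, and every element of $\genset{G,a}\setminus G$ is a product of copies of $a$ interspersed with elements of $G$ (which are bijections and so do not lower rank), every element $c \in \genset{G,a}\setminus G$ satisfies $\rank c \leq \rank a$. In particular, every idempotent of $\genset{G,a}\setminus G$ has rank at most $\rank a$.

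For the forward implication, assume $\genset{G,a}\setminus G$ is idempotent generated. Take $b = gah \in GaG$; since $g, h$ are bijections, $\rank b = \rank a$. Express $b = e_1 e_2 \cdots e_m$ with each $e_i$ an idempotent of $\genset{G,a}\setminus G$. Then $\rank a = \rank b \leq \rank e_i$ for every $i$, while by the observation above $\rank e_i \leq \rank a$. Hence each $e_i \in E$, and so $b \in \genset{E}$. This gives $GaG \subseteq \genset{E}$.

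For the reverse implication, assume $GaG \subseteq \genset{E}$. Every $c \in \genset{G,a}\setminus G$ can be written as
\[
c = g_0 a g_1 a g_2 \cdots g_{k-1} a g_k
\]
with $k \geq 1$ and $g_0, \ldots, g_k \in G$ (using the identity for missing $g_i$'s). Grouping as
\[
c = (g_0 a g_1)\,(1 \cdot a g_2)\,(1 \cdot a g_3) \cdots (1 \cdot a g_k),
\]
each factor lies in $GaG \subseteq \genset{E}$, so $c \in \genset{E}$. Hence $\genset{G,a}\setminus G = \genset{E}$, so the semigroup is idempotent generated. There is no real obstacle here; the whole content is the rank bound that forces the idempotents appearing in a factorization of a rank-$\rank a$ element to themselves lie in $E$.
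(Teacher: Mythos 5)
Your proof is correct and follows essentially the same route as the paper: both arguments rest on the facts that $GaG$ generates $\genset{G,a}\setminus G$ and that the rank bound $\rank c\leq\rank a$ for all $c$ in this semigroup forces every idempotent appearing in a factorization of an element of $GaG$ to lie in $E$. Your write-up merely makes explicit the rank comparison that the paper's proof leaves implicit.
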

\begin{proof}
Let $S=\genset{G, a}\setminus G$. Then
  every element in $S$ can be given as a product $g_1a g_2a\cdots g_{n
-1}a g_n$ for some $g_1, g_2, \ldots, g_n\in G$. In particular, the set $GaG$ is a generating set for $S$. Thus $\genset{G, a}\setminus G$ is idempotent generated  if and only if every element of $Ga G$ lies in the subsemigroup generated by the idempotents of $S$. Every element in $Ga G$ has rank equal to that of $a$. It follows that $S$ is idempotent generated if and
 only if every element of $Ga G$ lies in $\genset{E}$.   
\end{proof}


If $G\leq\sym$ and $\Gamma$ is a subset of $\{1,\ldots,n\}$, then  
the \emph{setwise stabilizer}
of $\Gamma$ in $G$ is the subgroup 
$G_\Gamma=\set{g\in G}{\Gamma g=\Gamma}$.  
The setwise stabilizer induces a group of permutations on the set 
$\Gamma$  denoted  by  $(G_\Gamma)^\Gamma$.  

\begin{lem}\label{reduce}
Let $a,\ b\in \trans$ where $\rank b^2=\rank b$ and such that there 
exist $g,\ h\in G$ with $(\ker a)g=\ker b$,  $(\im a)h
=\im b$, and $(g^{-1}a h)|_{\im b} (G_{\im b})^{\im b}=b|_{\im b} (G_{\im b})^{\im b}$. Then $\genset{G, a}=\genset{G, b}$.
\end{lem}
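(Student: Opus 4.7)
The plan is to write $b$ explicitly as a product of $a$ and three elements of $G$, from which both containments $\langle G,b\rangle\subseteq\langle G,a\rangle$ and $\langle G,a\rangle\subseteq\langle G,b\rangle$ follow by inverting the group factors. Set $c=g^{-1}ah$. The first step is routine bookkeeping with kernels and images: since $g,h$ are invertible, one checks that $\ker c=\ker b$ (using the hypothesis $(\ker a)g=\ker b$) and $\im c=(\im a)h=\im b$. So $c$ and $b$ agree on the combinatorial data of kernel class and image.

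Next I would exploit the hypothesis $\rank b^2=\rank b$, which is equivalent to $\im b$ being a transversal for $\ker b$. Because distinct elements of $\im b$ lie in distinct $\ker b$-classes and $\ker c=\ker b$, the restriction $c|_{\im b}$ is injective; since its image sits inside $\im c=\im b$ and $|\im b|$ is finite, $c|_{\im b}$ is a bijection of $\im b$. In particular both $c|_{\im b}$ and $b|_{\im b}$ are elements of the full symmetric group on $\im b$, so the coset condition $(g^{-1}ah)|_{\im b}(G_{\im b})^{\im b}=b|_{\im b}(G_{\im b})^{\im b}$ makes sense and yields $\sigma\in(G_{\im b})^{\im b}$ with $c|_{\im b}=b|_{\im b}\cdot\sigma$. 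Picking a preimage $g'\in G_{\im b}$ of $\sigma$ gives us a genuine element of $G$ that realises the discrepancy between $c$ and $b$ on $\im b$.

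The third step is to upgrade the identity on $\im b$ to an identity on all of $\{1,\ldots,n\}$, i.e.\ to show $c=bg'$. For any $\alpha\in\{1,\ldots,n\}$, let $\alpha'$ be the unique element of $\im b$ lying in the $\ker b$-class of $\alpha$; since $\ker c=\ker b$ we have $\alpha c=\alpha'c=(\alpha'b)\sigma=(\alpha b)\sigma=(\alpha b)g'$, because $\alpha b\in\im b$ and $\sigma=g'|_{\im b}$. Hence $c=bg'$, which rearranges to
\[
b=g^{-1}ah(g')^{-1}\in\genset{G,a},\qquad a=gb\,g'h^{-1}\in\genset{G,b},
\]
and both inclusions of semigroups follow at once.

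The main obstacle I anticipate is not any single calculation but keeping the conventions straight: in particular verifying $\ker c=\ker b$ from the one-sided condition $(\ker a)g=\ker b$, and being careful that an element $\sigma\in(G_{\im b})^{\im b}$ really does lift to a $g'\in G$ by extending $\sigma$ via its action on $\im b$ to the permutation $g'$ of $\{1,\ldots,n\}$ fixing $\im b$ setwise. Once those two conventions are fixed, the rest is just unwinding definitions.
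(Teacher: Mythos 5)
Your proposal is correct and follows essentially the same route as the paper: both arguments use $\rank b^2=\rank b$ to see that $\im b$ is a transversal for $\ker b$, deduce that $g^{-1}ah$ permutes $\im b$, pick an element of $G_{\im b}$ realising the coset condition, and then use the transversal property to promote agreement on $\im b$ to equality $g^{-1}ah\,u=b$ on all of $\{1,\ldots,n\}$. The only cosmetic difference is that you establish the stability of $\im b$ via injectivity plus finiteness, whereas the paper computes $(\im b)g^{-1}ah=\im b$ directly; both are fine.
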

\begin{proof}
First we verify that the transformation $g^{-1}ah$ leaves $\im b$ invariant,  
and so the expression $(g^{-1}a h)|_{\im b}(G_{\im b})^{\im b}$ makes sense. Indeed, set $I=\im b$. 
The condition $\rank b^2=\rank b$ implies that $I$ is a transversal for
$\ker b$, and so $Ig^{-1}$ is a transversal for $(\ker b) g^{-1}=\ker a$. Hence
 $Ig^{-1}ah=(\im a)h=\im b$. Therefore $(\im b)g^{-1}ah=\im b$ as claimed.

Next we show that the conditions of the lemma imply that 
there is an element $u\in G_{\im b}$ such that $g^{-1}ahu=b$. 
As $(g^{-1}a h)|_{\im b} (G_{\im b})^{\im b}=b|_{\im b}
(G_{\im b})^{\im b}$, we obtain that there is an element $u\in G_{\im b}$
such that $g^{-1}ahu$ induces the same permutation on $\im b$ as $b$.  Let 
$\alpha\in\{1,\ldots,n\}$. Since $I$ is a transversal for $\ker b$, there is
an element $\overline\alpha\in I$ such that $\alpha$ and $\overline\alpha$ are
in the same block of $\ker b$; that is $\alpha b=\overline \alpha b$. 
This implies that $\alpha g^{-1}$ and $\overline\alpha g^{-1}$ are in the same
block of $\ker a$, and so $\alpha g^{-1}a=\overline\alpha g^{-1}a$. 
Thus
$$
\alpha g^{-1}ahu=\overline\alpha g^{-1}ahu=\overline\alpha b=\alpha b.
$$
Hence the transformations $g^{-1}ahu$ and $b$ coincide as claimed. This 
gives that $a\in \genset{G,b}\setminus G$ and $b\in\genset{G,a}\setminus G$. 
\end{proof}

Let $G$ be a  subgroup of $\sym$, let $1\leq i\leq n-1$, let $I_1, I_2, \ldots, I_m$ be representatives of orbits of $G$ on subsets of $\{1,2, \ldots, n\}$ with size $i$, let $K_1, K_2, \ldots, K_r$ be representatives of orbits of $G$ on the partitions of $\{1,2,\ldots, n\}$ with $i$ classes, let $f_{j,k}$ be an fixed arbitrary element of $\trans\setminus \sym$ with image $I_j$ and kernel $K_k$, let $T_j$ be a transversal of cosets of the stabilizer 
$G_{I_i}$ in $G$, and finally, let $F=\set{f_{j,k}t}{t\in T_{j}}$.
Then to verify that $\genset{G, a}\setminus G$ is idempotent generated for all $a\in\trans\setminus \sym$ with $\rank(a)=i$, it suffices, 
by Lemma \ref{reduce} and the preceding comments, to verify that $\genset{G, a}\setminus G$ is idempotent generated for all $a\in F$.

\section{Final remarks and problems}

We finish the paper by stating some  related open problems.

(i)
Is it possible to prove the main theorems of this paper without
using results that rely on the classification of finite simple groups?
In order to give a positive answer to this question, one has to give another 
proof for Theorem~\ref{superlemma} that does not use Mar\'oti's bound for
the order of a proper primitive group.

(ii) Classify the subgroups $G$ of $\sym$ that together with any singular 
transformation $a$ satisfy $\genset{G,a} =\genset{a^g \mid 
g\in G}$.
 
(iii)  Prove classification theorems analogous to Theorems~\ref{main} 
and~\ref{th2} 
for linear groups and to groups of automorphisms of independence 
algebras; see~\cite{Cameronszabo,gould} for the background theory of 
independence algebras.

(iv) 
Classify the pairs $(G,a)$, where $G\leq \sym$ and $a\in\trans$ such 
that $\genset{G,a}$ is regular.  
Consider also the corresponding problem for linear groups. 
As mentioned in the introduction, 
McAlister proved that for every idempotent $e\in \trans$ 
such that $\rank (e)=n-1$,
and for all $G\leq \sym$ the semigroup $\genset{G,e}$ is regular.

\medskip

{\bf Acknowledgement.} 
The research presented in this paper was partially supported by the 
FCT and FEDER project ISFL-1-143
of Centro de \'Algebra da Universidade de Lisboa, and by 
the FCT and PIDDAC 
projects PTDC/MAT/69514/2006 and PTDC/MAT/101993/2008.
The third author would like to acknowledge the support of the Hungarian 
Scientific Research Fund (OTKA) grant~72845.



\def\cprime{$'$}

\end{document}